\title[A Univalent Formalization of Constructive Affine Schemes]{A Univalent Formalization of Constructive Affine Schemes}
\author{Max Zeuner and Anders M\"ortberg}
\address{Department of Mathematics, Stockholm University, Sweden}
\email{\href{mailto:zeuner@math.su.se}{zeuner@math.su.se} and \href{mailto:anders.mortberg@math.su.se}{anders.mortberg@math.su.se}}
\date{\today}
\thanks{Funding: this paper is based upon research supported by the
  Swedish Research Council (SRC, Vetenskapsrådet) under Grant
  No.~2019-04545. The research has also received funding from the Knut
  and Alice Wallenberg Foundation through the Foundation's program for
  mathematics.}
\begin{document}

\maketitle

\begin{abstract}
We present a formalization of constructive affine schemes in the \CubicalAgda~proof assistant.
This development is not only fully constructive and predicative,
it also makes crucial use of univalence.
By now schemes have been formalized in various proof assistants.
However, most existing formalizations follow the inherently
non-constructive approach of Hartshorne's classic ``Algebraic Geometry'' textbook,
for which the construction of the so-called structure sheaf is rather straightforwardly
formalizable and works the same with or without univalence.
We follow an alternative approach
that uses a point-free description of the constructive counterpart of
the Zariski spectrum called the Zariski lattice
and proceeds by defining the structure sheaf on formal basic opens
and then lift it to the whole lattice.
This general strategy is used in a plethora of textbooks, but formalizing it has
proved tricky. The main result of this paper is that with the help of
the univalence principle we can make this
``lift from basis'' strategy formal and obtain a fully formalized
account of constructive affine schemes.
\end{abstract}

\section{Introduction}

Algebraic geometry originated as the study of solutions of
polynomials. Historically, the geometric objects of interest would be
for example \emph{complex affine varieties}---subsets of
$\mathbb{C}^n$ defined by systems of polynomial equations.  Starting
with the pioneering work of Grothendieck in the 1960s, the scope of
the discipline was drastically widened, making it one of the most
pervasive in modern day mathematics. At the heart of this development
are \emph{schemes}---geometric objects that generalize from
algebraically closed fields, like $\mathbb{C}$, to arbitrary
commutative rings.

A point $a\in\mathbb{C}$ corresponds to the maximal ideal of the
polynomial ring $\mathbb{C}[x]$ consisting of polynomials $p$ such
that $p(a)=0$, \ie\ the ideal generated by
$(x-a)$. By looking not only at maximal ideals, but also at prime
ideals of $\mathbb{C}[x]$, we arrive at the \emph{spectrum} of
$\mathbb{C}[x]$, denoted $\Spec{\mathbb{C}[x]}$. As $\mathbb{C}[x]$
has a non-maximal prime ideal, the zero-ideal, $\Spec{\mathbb{C}[x]}$
contains an additional point to $\mathbb{C}$ and carries a very different
topology. This is called the \emph{Zariski topology} in which the open
sets are generated by \emph{basic opens} $D(p)\subseteq\Spec{\mathbb{C}[x]}$
where $p\in\mathbb{C}[x]$.
If $p\neq 0$, $D(p)$ corresponds to the set
of points $a$ where $p(a)\neq 0$ together with the zero-ideal.
The spectrum can then be equipped with a
\emph{structure sheaf} that associates to every Zariski open set $U$ a ring of
``rational functions'' definable on $U$. For a basic open $D(p)$, this
will be the ring of function $\nicefrac{q(x)}{p(x)^n}$ where $q$ is
another polynomial.  This corresponds to the functions of the quotient
ring $\mathbb{C}(x)$ that are definable everywhere but at the zeros of
$p$. See Vakil's ``The Rising Sea'' \cite[Ex. 3.2.3.1]{RisingSea} for a more in-depth
discussion of this motivating example and an illustration of $\Spec{\mathbb{C}[x]}$.

This construction can be carried out for any commutative ring $R$
instead of $\mathbb{C}[x]$: the spectrum $\Spec{R}$ is the set of
prime ideals of $R$ and its Zariski topology is again generated by
basic opens. For $f\in R$, the basic open $D(f)$ is the set of prime
ideals that do \emph{not} contain $f$. The structure sheaf maps $D(f)$
to the \emph{localization} $\locEl{R}{f}$, the ring of fractions
$\nicefrac{r}{f^n}$ where $r\in R$ and the denominator is a power of
$f$. One can prove that this always defines a sheaf, i.e.\ is
compatible with taking covers of open sets in a certain sense. 

When Grothendieck introduced the general notion of (affine) schemes,
he did so in a structural fashion that is typical for his work.
Mathematical objects, in particular algebraic structures, are taken to
be identical if they are isomorphic in some unique, or at least
canonical, way. When constructing the structure sheaf, however, this
leads to a problem of well-definedness: if $D(f) = D(g)$, then we
better have $\locEl{R}{f}=\locEl{R}{g}$.  Unfortunately, it is not
difficult to come up with examples violating this.  For example, we
have $D(x) = D(x^2)$ in $\mathbb{C}[x]$ (both functions vanish only at
$0$), but formally speaking $\locEl{\mathbb{C}[x]}{x}$ is not strictly
the same ring as $\locEl{\mathbb{C}[x]}{x^2}$ despite them clearly
being isomorphic and describing the same sub-ring of the quotient ring $\mathbb{C}(x)$,
as $\nicefrac{1}{x} = \nicefrac{x}{x^2}$.

In this paper we show how this problem can be solved with the help of
univalence.  In particular, we present a formalization in
\CubicalAgda~\cite{CubicalAgda2} of constructive affine schemes
following Coquand, Lombardi and Schuster \cite{ConstrSchemes}. In the
constructive setting, the Zariski spectrum of a commutative ring is
replaced by the so-called \emph{Zariski lattice}. Elements of this
lattice are \emph{finitely} generated by formal basic opens, which
allows for a completely predicative approach that does not require
additional assumptions like Voevodsky's resizing axioms
\cite{VoevodskyResizing}.

The definition of constructive affine schemes still works analogously
to the classical definition given in most textbooks ranging from
Grothendieck's authoritative classic ``EGA I'' \cite{EGA1}, to more
modern treatments such as ``Algebraic Geometry'' by G\"ortz and
Wedhorn \cite{GoertzWedhorn}, ``The Rising Sea'' by Vakil
\cite{RisingSea}, or Johnstone's ``Stone Spaces''
\cite{StoneSpaces}. In either case one starts with the basic opens, on
which the structure sheaf is defined and proved to be a sheaf. Using
abstract categorical machinery this is then lifted to a sheaf on the
whole Zariski spectrum/lattice.  More precisely, one takes the right
Kan extension along the inclusion of basic opens, which preserves the
sheaf property.

From a constructive, predicative point of view there are two
differences that make this construction work for the Zariski lattice.
Predicatively, the inclusion of basic opens into the Zariski lattice
is one of small categories, while the inclusion into the classical
spectrum is not.  Furthermore, since we are only concerned with
sheaves on a distributive lattice and not on a general locale or
topological space, we only have to consider finite covers.  This
allows for a predicative proof that the right Kan extension preserves
sheaves on lattices. From a classical point of view this is not really
a restriction as $\Spec{R}$ is always a \emph{coherent} space.
As a result, sheaves on $\Spec{R}$ are in bijection to (finitary) sheaves
on the Zariski lattice. For more details see
\eg\ Johnstone's ``Stone Spaces'' \cite{StoneSpaces} and
\cref{subsec:classicalcomparison} of this paper.

%

Regardless of whether one formalizes the classical or constructive definitions,
the main bottlenecks of the formalization
are already found at the level of basic opens.
First and foremost, there is the well-definedness problem described above.
The second bottleneck is
proving that the structure sheaf actually is a sheaf on basic opens. In
fact, the problem with the textbook proof of the sheaf property is the
well-definedness problem in disguise.  Those two points were exactly
where the most prominent formalization of schemes \cite{SchemesLean}
in \Lean's \mathlib \cite{Lean/mathlib} encountered problems.  In this
paper, we show that with the help of univalence it is in fact possible
to overcome the issues of well-definedness and formalize the
structure sheaf directly on basic opens and prove its sheaf property.
Even though we work in the constructive, predicative setting using the
Zariski lattice, the techniques used to overcome the problems on the
level of basic opens should be applicable to a classical formalization
in type theory with univalence and classical axioms added. The key
insight is that localizations are not just commutative rings, but also
commutative algebras over $R$. In $R$-algebras, isomorphisms, and thus
also paths, between two localizations are unique, which ensures
well-definedness of the structure sheaf.


As mentioned above, our work is completely
formalized\footnote{\label{footnote:formalization}All results discussed are
  integrated in the \CubicalAgda library and are summarized in:\\
  \url{https://github.com/agda/cubical/blob/310a0956bb45ea49a5f0aede0e10245292ae41e0/Cubical/Papers/AffineSchemes.agda} \\
  This is a permalink to the library at the time of writing, which type-checks with
  \Agda~version 2.6.3. \\
  A clickable rendered version that might be subject to change can be found  here:\\
  \url{https://agda.github.io/cubical/Cubical.Papers.AffineSchemes.html}}
in \CubicalAgda, an extension of the \Agda proof assistant \cite{Agda}
based on the cubical type theory of
\cite{CCHM18,CoquandHuberMortberg18} with fully constructive support
of the univalence axiom and higher inductive types (HITs). However,
nothing relies crucially on cubical features, or on univalence and
eliminators applied to higher constructors of HITs computing
definitionally, in our formalization. The only {HoTT/UF} features
that we rely on are univalence and set quotients (from which
propositional truncation follows). It would hence be possible to
perform the formalization in a system implementing Book
HoTT~\cite{HoTTBook} or in \UniMath \cite{UniMath}.
Our work is thus in line with the aim of
Voevodsky's Foundations library \cite{VoevodskyFoundationsLib} of
developing a library of constructive set-level mathematics based on
Univalent Foundations.

\paragraph*{Contributions}
As mentioned above, the formalization presented in this paper
generally follows the constructive, lattice-based approach of
\cite{ConstrSchemes}. However, a number of design choices had to be
made to ensure predicativity of our formalization and
to enable us to formally prove the well-definedness of the structure sheaf.
As a result some definitions and proofs deviate from the presentation
in \cite{ConstrSchemes}. The main design choices and contributions of
the paper and formalization can be summarized under the following topics:

\begin{itemize}

\item \textbf{Commutative algebra:} our formalization of localizations
  of commutative rings in \cref{subsec:localizations} closely follows
  Atiyah and MacDonald's classic textbook \cite{AtiyahMacDonald},
  which works very well for our constructive approach. However, giving
  a predicative definition of the Zariski lattice that does not
  increase universe level was more intricate. To this end
  \cref{ZarLat} contains a construction that refines the ideal-based
  description of \cite{ConstrSchemes} using ideas of Espa\~nol
  \cite{Espanol83}.

\item \textbf{Category theory:} in \cref{CatTheory} we present a
  formal notion of sheaf on a distributive lattice that closely
  follows \cite{ConstrSchemes}. However, in \cite{ConstrSchemes}
  presheaves are extended from a basis of a distributive lattice to
  the whole lattice in a somewhat non-standard finitary way. This is
  to ensure predicativity, but it actually causes problems when
  working in a univalent setting. We found that the point-wise right
  Kan extension of presheaves, as \eg\ presented in MacLane's classic
  textbook \cite{MacLaneCategories}, works just fine even in the
  constructive and predicative setting. We then give a proof that the
  Kan extension preserves the sheaf property. This can be seen as the
  main step towards a constructive and predicative ``comparison
  lemma'' that gives an equivalence of categories between sheaves on a
  lattice and sheaves on the basis of the lattice.

\item \textbf{Constructive affine schemes:} in
  \cref{sec:structuresheaf} we construct
  the structure sheaf on basic opens and extend it to the Zariski lattice.
  We give general heuristics for constructing
  presheaves (valued in $R$-algebras) on subsets defined using
  propositional truncation. The well-definedness of the presheaves thus
  constructed follows from univalence.
  The structure sheaf is a special instance
  of this construction with the basic opens seen as a subset of the
  Zariski lattice. Proving the sheaf property on basic opens can then
  be reduced to standard commutative algebra, again by using
  univalence in a way that does not require to extract the isomorphisms underlying
  the applications of univalence.

\end{itemize}


\section{Background}
\label{sec:background}

Here we give the necessary background for the paper.
We first sketch the constructive approach to schemes of
\cite{ConstrSchemes}.
We then continue with an introduction to the concepts of
\CubicalAgda needed for the paper.

\subsection{Affine schemes constructively}

Recall that, classically, the spectrum of a commutative ring $R$ is the set of its prime ideals
$\Spec R=\{\mathfrak{p}\subseteq R\;\vert\;\mathfrak{p}\text{ prime}\}$
equipped with the Zariski topology. The open sets of this topology
are generated by basic opens $D(f)=\{\mathfrak{p}\;\vert\; f\notin\mathfrak{p}\}$
for $f\in R$. Constructively, there are two issues with this.
First, the notion of prime ideal is not really well-behaved.
One of the main reasons for this is that the central notion of \emph{localizing} at a prime ideal
$\mathfrak{p}$ actually uses the set-theoretic complement $R\setminus \mathfrak{p}$,
which does not work well constructively without additional decidability assumptions.\footnote{
  See e.g.\ the discussion by Mines, Richman and Ruitenberg in their standard textbook on
  constructive algebra \cite[Section III.3]{MinesRichman}.}
To remedy this, one can define the notion of a \emph{prime filter} on $R$
and check that classically those are exactly the complements of prime ideals.

The second issue concerns the point-set definition of a topological space itself.
For a constructive development of algebraic geometry it is preferable to avoid this definition
and instead characterize the \emph{locale} of open sets
of $\Spec R$ in a direct, point-free way. This can be done by observing that
the closed sets of the Zariski topology admit a direct algebraic characterization.
Every closed set is of the form
$V(\mathfrak{a})=\{\mathfrak{p}\;\vert\;\mathfrak{a}\subseteq\mathfrak{p}\}$,
where $\mathfrak{a}$ is a \emph{radical ideal}. An ideal $\mathfrak{a}\subseteq R$
is radical if $\mathfrak{a}=\sqrt{\mathfrak{a}}$, where
\begin{align*}
  \sqrt{\mathfrak{a}}=\big\{\, x\in R \;\vert\; \exists n>0:x^n\in\mathfrak{a}\,\big\}
\end{align*}
The \emph{locale of Zariski opens} can thus be characterized by the set of radical ideals
of $R$. The join and meet operation can be defined using addition and multiplication of ideals.

From a predicative viewpoint this is still unsatisfactory. Predicatively, the ideals of a ring
form a proper class and consequently the Zariski locale is not a set in such a setting.
However, by restricting to the \emph{lattice of compact open sets} of the Zariski topology
these size issues can be avoided.\footnote{Through a more careful analysis
  one \emph{might} be able to define the structure sheaf on the large Zariski locale
  in predicative univalent foundations, as long as one uses a small type of basic opens.
  See the recent work by de Jong and H\"otzel Escard\'o \cite{DeJongEscardoSmall}
  and by Tosun and H\"otzel Escard\'o \cite{TosunEscardo} for results of this kind.
  For the development of constructive and predicative scheme theory however, it seems
  certainly advantageous to work with the small Zariski lattice.}
Classically, the objects of this lattice are finite
unions of basic opens $D(f_1)\cup\dots\cup D(f_n)$ and the join and meet operation
are just union $\cup$ and intersection $\cap$. Note that for the meet this only works
because basic opens are closed under intersections,
i.e.\ we have $D(f)\cap D(g)=D(fg)$ for any $f,g\in R$.

As with the locale of Zariski opens, this so-called \emph{Zariski lattice}
\ZL~of a commutative ring $R$ can be described in a point-free way.
This was first done by Joyal \cite{JoyalZarLat},
using the observation that the Zariski lattice has a certain universal property.
The lattice itself can be defined as the free distributive lattice generated by
\emph{formal symbols} $D(f)$, $f\in R$, satisfying the following relations:
\begin{align}
  & D(1)=\top \;\text{ and }\; D(0)=\bot \\
  & \forall f,g\in R:\;D(fg)=D(f)\wedge D(g) \\
  & \forall f,g\in R:\;D(f+g)\leq D(f)\vee D(g)
\end{align}
The induced map $D:R\to\ZL$ is universal in the following sense:
for any distributive lattice $L$ and \emph{support} map $d:R\to L$,
i.e.\ any map $d$ such that conditions (1)-(3) above hold for  $d$ (in place of $D$),
there is a unique lattice homomorphism $\varphi:\ZL\to L$ such that
the following commutes
\[
\begin{tikzcd}
  & R \arrow[dl,"D"']\arrow[dr,"d"] & \\
  \ZL \arrow[rr,dashed, "\exists!~\varphi"'] && L
\end{tikzcd}
\]
Using the correspondence of Zariski opens with radical ideals, the elements of \ZL~can
also be described as the \emph{radicals of finitely generated ideals}.
For two finitely generated ideals $\mathfrak{a},\mathfrak{b}\subseteq R$,
the join and meet of the radicals are then given by
\begin{align*}
  \sqrt{\mathfrak{a}}\vee \sqrt{\mathfrak{b}}=\sqrt{\mathfrak{a}+\mathfrak{b}}
  \quad\quad\text{and}\quad\quad
  \sqrt{\mathfrak{a}}\wedge \sqrt{\mathfrak{b}}=\sqrt{\mathfrak{a}\mathfrak{b}}
\end{align*}
using the fact that addition and multiplication of two finitely generated ideals
is again finitely generated. The support $D:R\to\ZL$ maps $f\in R$ to the radical
of the principal ideal $\sqrt{\langle f\rangle}$ and for any support $d:R\to L$,
the unique morphism $\varphi:\ZL\to L$ is given by
\begin{align*}
  \varphi\big(\sqrt{\langle f_1,\dots,f_n\rangle}\;\big)\;=\; d(f_1) \vee\dots\vee d(f_n)
\end{align*}
In \cref{ZarLat}, we will show how to formalize this Zariski lattice of radicals
of finitely generated ideals and prove its universal property while avoiding size issues.

The lattice theoretic approach does require a notion of a sheaf on a distributive lattice.
Recall that a sheaf on a topological space $X$ is just a sheaf on the locale
of open sets of $X$. By restricting the definition of sheaf on a locale to finite covers one
obtains sheaves on a distributive lattice. This means that for any distributive lattice $L$,
a presheaf $\mathcal{F}:L^{op}\to \mathcal{C}$, valued \eg\ in commutative rings (\ie{} $\mathcal{C} = \mathsf{CommRing}$),
is a sheaf if for all $x_1,\dots,x_n\in L$ the following is an equalizer diagram
\begin{align*}
  \mathcal{F}\Big(\bigvee_{i=1}^n x_i\Big) \to \prod_{i=1}^n\mathcal{F}(x_i)\rightrightarrows
                                              \prod_{i<j}\mathcal{F}(x_i \wedge x_j)
\end{align*}
A basis of a distributive lattice is a subset $B\subseteq L$ containing $\top$
and closed under meets, such that for any $x\in L$
there exists a finite list $b_1,\dots,b_n\in B$ such that $x=\bigvee_{i=1}^nb_i$.
In \cref{CatTheory}, we describe how to obtain sheaves on $L$ from sheaves on $B$.
This works analogous to the special case of the
so-called \emph{comparison lemma} for topological spaces.
For the structure sheaf, the idea is to map $D(f)$  to the ring $\locEl{R}{f}$,
the \emph{localization of $R$ away from $f$}.
Recall that for a subset $S\subseteq R$ containing $1$ and being closed under multiplication,
the localization $S^{-1}R$ is defined as the ring of fractions $\nicefrac{r}{s}$ where $r\in R$
and $s \in S$. Equality of fractions is given by
\begin{align*}
  \frac{r_1}{s_1}=\frac{r_2}{s_2} \quad \text{iff} \quad \exists u\in S:\; u(r_1s_2-r_2s_1)=0
\end{align*}
$\locEl{R}{f}$ is defined by localizing with $S=\{1,f,f^2,f^3,\dots\}$. Its elements
are thus fractions $\nicefrac{r}{f^n}$ where the denominator is a power of $f$ and
equality can rephrased as
\begin{align*}
  \frac{r}{f^n}=\frac{r'}{f^m} \quad \text{iff} \quad \exists k\in\mathbb{N}:\; f^{k+m}r=f^{k+n}r'
\end{align*}
Verifying that the presheaf defined by sending $D(f)$ to $\locEl{R}{f}$ is indeed a sheaf
on the basis $\BO\subseteq\ZL$ of basic opens
proceeds the same way in any constructive or classical account.
As indicated in the introduction, there are some issues to be overcome
when formalizing the construction of the structure sheaf. In this
paper we discuss how a solution to these problems can look like in a
univalent setting.\footnote{A solution that is \eg\ taken in \cite{ConstrSchemes},
  is to map $D(f)$ to $S_f^{-1}R$, the ring of fractions whose denominators
  are elements of $S_f=\{g\,\vert\, D(f)\subseteq D(g)\,\}$.
  It is immediate to see that if $D(f)=D(g)$, then $S_f^{-1}R=S_g^{-1}R$,
  but it is not as natural to work with these rings.
  Usually one still wants to appeal to the ``canonical isomorphism''
  between $\locEl{R}{f}$ and $S_f^{-1}R$, as in \eg\ \cite[Sect. 1.3]{EGA1}.}

\subsection{Set-level univalent mathematics in CubicalAgda}
We will now briefly discuss the concepts needed from \CubicalAgda for
this paper, for more details see \cite{CubicalAgda2}.
Our notation is inspired by \Agda syntax and
the \systemname{agda/cubical} library,
but we have taken some liberties when typesetting, e.g.\ shortening
notations and omitting some projections and universe levels whenever possible.
We write $\func{Type}~\var{ℓ}$ for universes (at level~$\ell$) and
$\tySigmaNoParen{x}{A}{B(x)}$
for dependent pair types over a family $B:A\to\Type~\ell$.
The major difference when working in \CubicalAgda compared to vanilla
\Agda or Book HoTT is that the primary identity type 
is changed from Martin-Löf's inductive construction
\cite{MartinLof75itt} to a primitive \emph{path}-type. The
identification $x \,\func{$\equiv$}\,y$ is captured by
$\func{Path}\,A\,x\,y$, the type of functions $p : \func{I} \to A$,
where $\func{I}$ is a primitive interval type, restricting
definitionally to $x$ and $y$ at the endpoints \cons{i0} and \cons{i1}
of \func{I}.
\CubicalAgda also has a dependent path type, \func{PathP}. Given a
line of types \var{B} : \func{I} $\to$ \func{Type}, which we
 may think of as \var{B}(\cons{i0}) \func{$\equiv$} \var{B}
(\cons{i1}), and \var{x} : \var{B}(\cons{i0}), \var{y} : \var{B}
(\cons{i1}), the type \func{PathP} \var{B} \var{x} \var{y} expresses
that \var{x} and \var{y} may be identified relative to \var{B}. The
regular 
path type $\func{\_$\equiv$\_}$ is, by
definition, \func{PathP} ($\lambda$ \var{i} $\to$ \var{A}), \ie\ the
special case of a constant line of types.

\CubicalAgda also comes with a function
$\func{ua} : A\,\func{≃}\,B \to A\,\func{≡}\,B$ which promotes
equivalences (or isomorphisms) of types to paths between these
types. The fact that this map is an equivalence itself is a way to
formulate Voevodsky's univalence axiom.
A reasonable question to ask in a univalent setting is
whether an equivalence of types can be promoted to an equality of
structured types, such as groups or rings. The \emph{Structure
  Identity Principle (SIP)} \cite[Sect.~9.8]{HoTTBook} is an
informal principle which attempts to answer this: given two structured
types $(A,S_A)$ and $(B,S_B)$ and an equivalence of underlying types
$A \simeq B$ which is a homomorphisms with respect to the structure in
question, we get a path of structured types
$(A,S_A)\,\func{$\equiv$}\,(B,S_B)$. For instance, an isomorphism of
rings $R$ and $S$ induces a path $R\,\func{$\equiv$}\,S$. This has
been implemented in
\systemname{agda/cubical} using the cubical SIP of
Angiuli,~Cavallo,~Mörtberg~and~Zeuner~\cite{POPLPaper}. For this paper
we will use \func{sip} to denote the function
that turns isomorphisms of commutative rings or $R$-algebras (over a ring $R$) into paths.

Univalence refutes \emph{Uniqueness of Identity Proofs (UIP)}, or
Streicher's axiom K \cite{Streicher93}, because it produces equality
proofs in \func{Type} that are not equal \cite[Ex. 3.1.9]{HoTTBook}. In the presence
of univalence, it is therefore important to keep track of which types
satisfy UIP or related principles expressing the complexity of a
type's equality relation. In the terminology of HoTT/UF, a type
satisfying UIP is called an \emph{h-set} (\emph{homotopy set}, henceforth
simply \emph{set}), while a type whose elements are all equal is
called an \emph{h-proposition} (henceforth \emph{proposition}).

Another very important concept in HoTT/UF is that of
\emph{contractible} types, \ie\ types with exactly one element:
\ExecuteMetaData[agda/latex/Background.tex]{contr}
We can characterize propositions as types whose equality types are
contractible, just as sets are types whose equality types are
propositions. Thus contractible types, propositions, and sets serve as
the bottom three layers of an infinite hierarchy of types introduced
by Voevodsky, known as \emph{h-levels} \cite{Voevodsky10bonn} or
\emph{$n$-types} \cite{HoTTBook}. This paper is about set-level
mathematics, so we are mainly interested in these 3 bottom layers.
However, univalence implies that collections of set-level
structures (\eg\ the collection of all commutative rings or
$R$-algebras) are one level higher than sets. Types at this level
are called \emph{h-groupoids} (heceforth \emph{groupoids}) and will be the
only types of h-level higher than $2$ in the paper.
We write $\func{isProp}~A$ to say that $A$ is a
proposition and $\func{isSet}~A$ to say that $A$ is a set.
The ``universe of propositions'' $\func{hProp}~\ell$ is defined as
$\tySigma{A}{\Type~\ell}{\func{isProp}~A}$,
and if $\func{isSet}~A$, we call functions $S:A\to\hProp~\ell$
a \emph{subset} of $A$. For $a:A$ we denote by $a~\func{$\in$}~S$
the type of proofs that $a$ is actually in $S$. It is often convenient
to identify the subset $S$ with $\tySigma{a}{A}{a~\func{$\in$}~S}$,
which can be seen as a sub-type of $A$. With some abuse of notation
we will not distinguish between subsets as functions and the
corresponding $\Sigma$-type. We thus write $a:S$ for elements
of $S$ when the proof of $a$ belonging to $S$ can be ignored.

Another concept from HoTT/UF which \CubicalAgda supports are higher
inductive types (HITs). These allow us to define many important
operations on types, such as truncations. For instance, the
propositional truncation is defined by: 
\ExecuteMetaData[agda/latex/Background.tex]{propTrunc}
This HIT takes a type $A$ and forces it to be a proposition. This is a
very important construction for capturing existential quantification
in HoTT/UF:
\begin{align*}
  \tyExistsNoParen{x}{A}{P(a)} = \ptrunc{\tySigmaNoParen{x}{A}{P(a)}}
\end{align*}
In this paper, we follow the HoTT Book terminology and say that $a$
\emph{merely} exists when it is existentially quantified. Also, note
that the propositional truncation in the definition is crucial. In
HoTT/UF, $\func{Σ}\,A\,P$ without the truncation, is interpreted as the
total space of $P$, which may be highly non-trivial. For example,
a subset $B$ of a lattice $L$ is a basis if
\begin{align*}
  \forall\,(x:L)\to\tyExists{b_1,\dots,b_n}{B}{\tyPath{\textstyle\bigvee_{i=1}^nb_i}{x}}
\end{align*}
Here the propositional truncation is crucial as we will see
in \cref{ZarLat}, when proving that the basic opens
form a basis of \ZL.

The main HIT that we use in this paper is the \emph{set quotient}, which
quotients a type by an arbitrary relation, yielding a set. It has three
constructors: \cons{[\_]}, which includes elements of the underlying type,
\cons{eq/}, which equates all pairs of related elements, and \cons{squash/}, which
ensures that the resulting type is a set:
\ExecuteMetaData[agda/latex/Background.tex]{setquot}
We can write functions out of $A\;\func{/}\;R$ by pattern-matching;
this amounts to writing a function out of $A$ (the clause for $\cons{[\_]}$) which
sends $R$-related elements of $A$ to equal results (the clause for $\cons{eq/}$),
such that the image of the function is a set (the clause for $\cons{squash/}$).
Set quotients and propositional truncations have in common that the resulting
type will be of a fixed h-level and this makes it very hard to map into types
of higher h-levels. In fact, the higher the h-level of the target type,
the more complicated the coherence conditions that need to be proved.
We will see an example of this in \cref{sec:structuresheaf}.

\section{Commutative algebra}
\label{sec:algebra}

In this section, we first discuss our formalization of localizations of rings, followed by the definition of the Zariski lattice.
These objects can be described by universal properties,
but may also be concretely implemented as set quotients.
One of the guiding principles of this project was to work with concrete implementations
and mainly use universal properties to construct equivalences and paths
(via the SIP).
As a result the formalization follows the usual informal treatment
in the commutative algebra literature quite closely.

\subsection{Localizations}
\label{subsec:localizations}

Our formalization of localizations of commutative rings follows the classic
textbook of Atiyah and MacDonald \cite{AtiyahMacDonald},
with our main result being a path version of \cite[Cor. 3.2]{AtiyahMacDonald}.
Note that the definition of localization is actually the same
in classical and constructive algebra.\footnote{
  Compare \cite{AtiyahMacDonald} with \eg\ the books by Lombardi and Quitté \cite{LombardiQuitte}
  or Mines, Richman and Ruitenburg \cite{MinesRichman}.}
For the remainder of this paper we will only consider commutative rings with a
multiplicative unit (denoted by $1$). Let $R$ be such a ring and
$S$ a subset of $R$ that contains $1$
and is closed under multiplication.
The formalization of localization
is then straightforward:
\ExecuteMetaData[agda/latex/localisation.tex]{defLoc}
The underscores in the definition of \func{≈} correspond to the proofs that
$s_1,\,s_2$ and $s$ are elements of $S$ respectively. As these are unimportant
to the definition of \func{≈}, we can safely omit them.
\begin{remark}
  It might be surprising that we define \func{≈} using a
  $\func{Σ}$ and not an \func{∃} (as is done \eg\ in~\cite{VoevodskyFoundationsLib}).
  However, it turns out that it does not matter whether one quotients by the
  truncated relation using \func{∃} or the untruncated relation using \func{Σ},
  as the resulting set-quotients will be equivalent.
  As we do not need to prove anything about \func{≈}
  except it being an equivalence relation, it is more convenient to work without the truncation.
\end{remark}
Equipping $S^{-1}R$ with the structure of a commutative ring
is laborious in \CubicalAgda, but the proofs generally proceed as in any textbook.
The same holds for the universal property. Note that for this we need the
canonical homomorphism $\nicefrac{\_}{1}:R\to S^{-1}R$, mapping $r:R$ to
$\tmTrunc{\tmPair{r}{1}}$, the equivalence class corresponding to $\nicefrac{r}{1}$.
The universal property then states that for any commutative ring $A$
with a morphism $\varphi :R\to A$, such that for all $s:S$ we have $\varphi(s)\in A^\times$ (\ie\ that $\varphi(s)$ is a unit in $A$),
there is a unique morphism $\psi:S^{-1}R\to A$, such that the following commutes
\[
\begin{tikzcd}
  &R\arrow[ld,"\nicefrac{\_}{1}"']\arrow[rd,"\varphi"]&\\
  S^{-1}R\arrow[rr,dashed,"\exists!~\psi",swap]&&A
\end{tikzcd}
\]
The key observation for the main results of this paper is that localizations
are $R$-algebras via the canonical homomorphism $\nicefrac{\_}{1}$.
The type of $R$-algebras is equivalent to the $\Sigma$-type of a commutative ring $A$
together with a ring homomorphism $\varphi:R\to A$. An homomorphism between $R$-algebras
$(A,\varphi)$ and $(B,\psi)$
is just a ring homomorphism
$\chi:A\to B$ together with a path $\tyPath{\chi\circ \varphi}{\psi}$.
The type of $R$-algebra homomorphisms will be denoted by
$\text{Hom}_R\big[(A,\varphi)\,,(B,\psi)\big]$ or just
$\text{Hom}_R\big[A,B\big]$ if the morphisms are clear from context.

The universal property of localization then becomes a statement about $R$-algebras.
In HoTT/UF unique existence is defined as contractibility of
$\Sigma$-types,
so the universal property of the localization at $S$ becomes:
  \emph{for any $R$-algebra $(A,\varphi)$ s.t.\ $\varphi(S)\subseteq A^\times$,
    the type $\text{Hom}_R\big[S^{-1}R\,,(A,\varphi)\big]$ is contractible.}
Combining the proof of \cite[Cor. 3.2]{AtiyahMacDonald} with the SIP,
we can then prove the following:\footnote{In \Lean's \mathlib \emph{a} localization is defined
  to be any ring-morphism-pair satisfying the
  three conditions of \cref{LocChar}.
  The formulation of this predicate
  is attributed
  to Neil
  Strickland in \cite{SchemesLean}.}
\begin{lemma}\label{LocChar}
  Let  $A$ be a commutative ring with a morphism $\varphi :R\to A$ satisfying
  \begin{itemize}
  \item $\forall (s:S)\to\varphi(s)\in A^\times$
  \item $\forall (r:R)\to\tyPath{\varphi(r)}{0}\to\tyExists{s}{S}{\tyPath{sr}{0}}$
  \item $\forall (a:A)\to\tyExists{(\tmPair{r}{s})}{R\times S}{\tyPath{\varphi(r)\varphi(s)^{-1}}{a}}$
  \end{itemize}
  From this we can construct a path $\tyPath{S^{-1}R}{A}$, which is unique as a path in $R$-algebras.
\end{lemma}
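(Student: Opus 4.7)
The plan is to verify that $(A,\varphi)$ itself satisfies the universal property of the localization at $S$, and then extract both the path and its uniqueness from the induced contractibility statement combined with the SIP.

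First I would check that the three hypotheses give us an $R$-algebra isomorphism $\psi : S^{-1}R \to A$. Condition~1 says $\varphi(S)\subseteq A^{\times}$, which is exactly the precondition of the universal property of the localization, so there is a unique $R$-algebra homomorphism $\psi$ making the triangle with $\nicefrac{\_}{1}$ and $\varphi$ commute. Explicitly, $\psi$ sends $\tmTrunc{\tmPair{r}{s}}$ to $\varphi(r)\varphi(s)^{-1}$, where the inverse exists by condition~1. Surjectivity of $\psi$ is condition~3 almost verbatim: every $a:A$ is in the image of $\psi$ because there merely exists $\tmPair{r}{s}:R\times S$ with $\varphi(r)\varphi(s)^{-1}=a$, and surjectivity is a proposition so the propositional truncation in \func{∃} poses no issue. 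For injectivity, it suffices to show that $\psi\big(\tmTrunc{\tmPair{r}{s}}\big)=0$ implies $\tmTrunc{\tmPair{r}{s}}=0$ in $S^{-1}R$; multiplying by $\varphi(s)$ we get $\varphi(r)=0$, so by condition~2 there merely exists $s':S$ with $s'r=0$, which is precisely the relation making $\nicefrac{r}{s}=\nicefrac{0}{1}$ in the set quotient.

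Next, having an $R$-algebra isomorphism $\psi$, I would feed it into the cubical SIP function \func{sip} specialized to $R$-algebras, yielding a path $\tyPath{S^{-1}R}{A}$ in the type of $R$-algebras. This is the existence part.

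For uniqueness as a path in $R$-algebras, I would invoke the universal property once more: since condition~1 holds for $(A,\varphi)$, the type $\text{Hom}_R\big[S^{-1}R,(A,\varphi)\big]$ is contractible, so in particular any two $R$-algebra homomorphisms $S^{-1}R\to A$ are equal; combined with the fact that invertibility is a proposition on morphisms of sets, the type of $R$-algebra isomorphisms is contractible. By the SIP applied to $R$-algebras, paths of $R$-algebras correspond to isomorphisms of $R$-algebras, so the type of paths $\tyPath{S^{-1}R}{A}$ in $R$-algebras is contractible.

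The only delicate step is bookkeeping around the fact that we must work with $R$-algebras rather than bare commutative rings: the well-definedness of the induced path really needs the compatibility datum $\tyPath{\psi\circ\nicefrac{\_}{1}}{\varphi}$ so that uniqueness follows from the contractibility of the $\text{Hom}_R$-type. This is exactly the point that the paper emphasizes throughout, namely that paths between localizations are unique in $R$-algebras but not in plain commutative rings, so one must be careful to apply \func{sip} at the $R$-algebra level rather than at the ring level.
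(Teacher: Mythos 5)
Your proposal is correct and follows essentially the same approach the paper indicates: unpack the argument of Atiyah--MacDonald Cor.~3.2 to produce a unique $R$-algebra isomorphism $S^{-1}R \cong A$ (using condition~1 for the induced map, condition~3 for surjectivity, condition~2 for injectivity, and contractibility of $\text{Hom}_R$ for uniqueness), then apply \func{sip} to obtain the path and deduce its uniqueness from the contractibility of the isomorphism type.
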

With this result we can transport proofs about localizations to any suitable
ring and morphism pair, \ie\ $R$-algebra, satisfying the three conditions above.
Below we will see a few applications of this result that will be used for
formalizing constructive affine schemes.
The important case for our purpose is $\locEl{R}{f}$,
the localization of $R$ \emph{away from} $f$.
This can be seen as inverting a single element $f$ in $R$.
The subset $S=\{1,f,f^2,f^3,...\}$ is easily defined in \CubicalAgda as
the set of $g:R$ for which we have an inhabitant of $\tyExists{n}{\func{ℕ}}{\tyPath{g}{f^n}}$.

For the remainder of this section let $f,g:R$.
By the canonical homomorphism we get an element $\nicefrac{g}{1}$ in $\locEl{R}{f}$.
With a bit of abuse of notation we denote the localization away from this element by
$\locEl{\locEl{R}{f}}{g}$.
This is an $R$-algebra by applying the canonical morphism $\nicefrac{\_}{1}$ twice.
We can of course also localize away from $(f~\field{·}~g)$, thus obtaining $\locEl{R}{fg}$.
Using \cref{LocChar}, we can construct a (unique) path between these two,
which will be used for the structure sheaf.
Similarly, we also get other useful paths.
\begin{lemma}\label{InvElIds}
  We have the following paths for both commutative rings and $R$-algebras:
  \begin{enumerate}
  \item $\tyPath{\locEl{\locEl{R}{f}}{g}}{\locEl{R}{fg}}$
  \item $\tyPath{\locEl{R}{f}}{R}$, if $f~\func{∈}~R^\times$
  \item $\tyPath{\locEl{R}{f}}{\locEl{R}{g}}$,
    if $\nicefrac{f}{1}~\func{∈}~\locEl{R}{g}^\times$
    and $\nicefrac{g}{1}~\func{∈}~\locEl{R}{f}^\times$
  \end{enumerate}
\end{lemma}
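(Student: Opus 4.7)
The plan is to apply \cref{LocChar} three times, once for each of the three paths claimed. Since \cref{LocChar} produces paths in $R$-algebras which are unique there, the commutative-ring-level paths follow by forgetting the algebra structure.

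For part (1), the strategy is to take $A=\locEl{\locEl{R}{f}}{g}$ as an $R$-algebra via the composite $\varphi:R\to\locEl{R}{f}\to\locEl{\locEl{R}{f}}{g}$ of the two canonical maps $\nicefrac{\_}{1}$, and verify the three conditions of \cref{LocChar} relative to $S=\{1,fg,(fg)^2,\ldots\}$. For (a), the image of $f$ is a unit because $f/1$ is already a unit in $\locEl{R}{f}$, the image of $g$ is a unit because $g/1$ is inverted by the outer localization, so each $\varphi((fg)^n)$ is a unit as a product of units. For (b), the assumption $\varphi(r)=0$ unfolds through the two set-quotients to yield annihilator witnesses at each level, which can be combined into a single power of $fg$ that kills $r$. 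For (c), an arbitrary element of the iterated localization has the form $\frac{r/f^m}{g^n}$, and bringing to a common denominator gives $\frac{rf^ng^m/1}{(fg)^{m+n}/1}$, which equals $\varphi(rf^ng^m)\cdot\varphi((fg)^{m+n})^{-1}$.

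For part (2), we apply \cref{LocChar} with $A=R$ and $\varphi=\mathrm{id}_R$. All three conditions become trivial: powers of a unit are units, $r=0$ is annihilated by $1$, and every $a:R$ equals $\mathrm{id}(a)\cdot\mathrm{id}(1)^{-1}$. The uniqueness part of \cref{LocChar} then yields $\tyPath{\locEl{R}{f}}{R}$.

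For part (3), rather than applying \cref{LocChar} directly (which would require reproving analogues of all three conditions), the cleanest approach is to chain parts (1) and (2). Given $\nicefrac{g}{1}\in\locEl{R}{f}^\times$, applying (2) internally to $\locEl{R}{f}$ with the unit $g/1$ gives $\tyPath{\locEl{\locEl{R}{f}}{g}}{\locEl{R}{f}}$, and (1) gives $\tyPath{\locEl{\locEl{R}{f}}{g}}{\locEl{R}{fg}}$. Symmetrically, using $\nicefrac{f}{1}\in\locEl{R}{g}^\times$, one obtains $\tyPath{\locEl{\locEl{R}{g}}{f}}{\locEl{R}{g}}$ and $\tyPath{\locEl{\locEl{R}{g}}{f}}{\locEl{R}{gf}}$. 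Composing these paths with the obvious $\tyPath{\locEl{R}{fg}}{\locEl{R}{gf}}$ (arising from the commutativity path $\tyPath{fg}{gf}$) yields the desired path $\tyPath{\locEl{R}{f}}{\locEl{R}{g}}$.

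The main obstacle lies in part (1), specifically the third condition of \cref{LocChar}: rewriting $\frac{r/f^m}{g^n}$ as a fraction whose denominator is a power of $fg$. This involves genuine set-quotient bookkeeping, since one must exhibit an explicit chain of equalities inside the double set-quotient, invoking the defining equivalence relation at both levels. The second condition is similar in flavour and will likely be proved by a parallel unfolding. Once these algebraic identities are in place, the full strength of \cref{LocChar} — namely that it yields an $R$-algebra path, not merely a ring isomorphism — delivers the result in both required settings without any further manipulation, illustrating the main thesis of the paper that computing with localizations as $R$-algebras is what makes well-definedness tractable.
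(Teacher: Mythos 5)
Your overall strategy matches the paper's intent exactly: \cref{InvElIds} is proved by three applications of \cref{LocChar}, and the algebraic verifications you sketch for parts (1) and (2) are correct. For part (3) you deviate slightly by chaining (1) and (2) rather than applying \cref{LocChar} directly to $\locEl{R}{g}$ with $S=\{1,f,f^2,\dots\}$; both routes work, and yours has the advantage of reusing the already-established identifications. However, one detail deserves more care than you give it: the lemma claims paths \emph{in $R$-algebras}, and when you apply (2) ``internally'' to the base ring $\locEl{R}{f}$ with the unit $g/1$, what \cref{LocChar} hands you is an $\locEl{R}{f}$-algebra path $\locEl{\locEl{R}{f}}{g}\func{≡}\locEl{R}{f}$, not an $R$-algebra path. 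To compose this with the $R$-algebra path from part (1), you must note that the underlying $\locEl{R}{f}$-algebra isomorphism is automatically an $R$-algebra isomorphism (the $R$-algebra structure on both sides factors through $R\to\locEl{R}{f}$, and the iso commutes with $\nicefrac{\_}{1}$), and then apply the SIP at the $R$-algebra level. This is true and easy, but it is an extra step that your write-up elides, and it is precisely the kind of bookkeeping the paper's $R$-algebra framing is designed to surface; in a formalization it would not come ``for free.''
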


\subsection{The Zariski lattice}
\label{ZarLat}

Next, we provide a definition of the Zariski lattice that does not lead to size issues,
while still being convenient to work with.
We have already seen that the Zariski lattice \ZL, which classically corresponds
to the compact open sets of the Zariski topology, can be described as the lattice
of radicals of finitely generated ideals. The meet and join of this lattice are defined
using multiplication and addition of ideals. With some elementary ideal theory
this should be straightforward to formalize.
Unfortunately, without any form of impredicativity,
like resizing axioms,
this leads to size issues.

So far we have avoided being explicit about universe levels, but in this section let $\ell$ be the level of the
base ring $R$, that is, the level of the universe in which the underlying type of $R$ lives.
Being precise about universe levels, subsets of $R$ are elements of $R\to\hProp~\ell$,
living in $\Type~(\ell+1)$, the next bigger universe.
The type of all ideals of $R$,
which is just the $\Sigma$-type of
subsets satisfying the ideal property, is hence in $\Type~(\ell+1)$. However, for technical reasons
to be discussed in the next section, we need $\ZL:\Type~\ell$.
Consequently, the definition of \ZL\ must not rely on the type of all ideals of $R$.

To avoid this issue, we use a construction due to Espa\~{n}ol \cite{Espanol83}.
Since we are only concerned with the radicals of finitely generated ideals,
we can describe \ZL\ in terms of generators instead of arbitrary ideals. In particular,
a list of generators $\alpha = [\alpha_0,\dots,\alpha_n]$ with $\alpha_i:R$ corresponds
to the radical of the ideal generated by the $\alpha_i$. In other words, we can obtain
\ZL\ by quotienting the type of lists with elements in $R$, by the relation
\[
  \alpha \sim \beta \quad\Leftrightarrow\quad
  \big(\forall~i\to\beta_i\in\sqrt{\langle\alpha_0,\dots,\alpha_n\rangle}\,\big)\,\, \text{and} \,\,
\big(\forall~i\to\alpha_i\in\sqrt{\langle\beta_0,\dots,\beta_m\rangle} \,\big)
\]
Here $\langle\alpha_0,\dots,\alpha_n\rangle$ is the ideal generated by the $\alpha_i$'s.
As both the type of lists and $\sim$ live in $\Type~\ell$ so does their quotient \ZL.
It might seem more natural to quotient by the relation
\begin{align*}
  \alpha \threesim \beta \quad\Leftrightarrow\quad
  \tyPath{\sqrt{\langle\alpha_0,\dots,\alpha_n\rangle}}{\sqrt{\langle\beta_0,\dots,\beta_m\rangle}}
\end{align*}
Unfortunately the type of paths between
two such radicals is large, as for any two ideals $I,J$ we have $\tyPath{I}{J}:\Type~(\ell+1)$.
Still, $\sim$ is equivalent to
$\threesim$ in the sense that we have
$\alpha\sim\beta$ if and only if $\alpha\threesim\beta$. This equivalence can then be used in proofs.

Equipping \ZL{} with the distributive lattice structure requires us to introduce operations
on lists that correspond to ideal addition and multiplication.
For the join we can take list concatenation $\plusplus{\_}{\_}$
as this corresponds
to addition of finitely generated ideals in the sense that for any two lists
$\alpha,\beta$ we have that
\begin{align}\label{plusplusfact}
  \begin{split}
  \langle\;\plusplus{[\alpha_0,\dots,\alpha_n]}{~[\beta_0,\dots,\beta_m]}\;\rangle
  ~&\func{≡}~\langle\alpha_0,\dots,\alpha_n,\beta_0,\dots,\beta_m\rangle \\
  ~&\func{≡}~\langle\alpha_0,\dots,\alpha_n\rangle+\langle\beta_0,\dots,\beta_m\rangle
    \end{split}
\end{align}
When checking that
$\plusplus{\_}{\_}$ defines an operation on the quotient \ZL, it suffices
to check that it respects $\threesim$, which in turn follows from (\ref{plusplusfact}).

For the meet of \ZL{} we need to define an operation $\_\cdot\cdot\_$ on lists
that corresponds to multiplication of finitely generated ideals.
For two lists $\alpha,\beta$ this product $\alpha\cdot\cdot\beta$ is the list
of all products of the form $\alpha_i\beta_j$.
Proving the correspondence to ideal multiplication, \ie\
\begin{align}\label{dotdotfact}
  \begin{split}
  \langle\;[\alpha_0,\dots,\alpha_n]\cdot\cdot\,[\beta_0,\dots,\beta_m]\;\rangle
  ~&\func{≡}~\langle\alpha_0\beta_0,\dots,\alpha_n\beta_0,\dots,\alpha_0\beta_m,\dots,\alpha_n\beta_m\rangle \\
  ~&\func{≡}~\langle\alpha_0,\dots,\alpha_n\rangle\cdot\langle\beta_0,\dots,\beta_m\rangle
    \end{split}
\end{align}
is much more involved than (\ref{plusplusfact}),
but gives us the well-definedness of $\_\cdot\cdot\_$
on the quotient. Proving the lattice laws also proceeds by using
(\ref{plusplusfact}) and (\ref{dotdotfact}), together with the equivalence of
$\sim$ and $\threesim$, thus reducing these laws to special cases of standard
equalities about ideal addition/multiplication and radical ideals.

Showing the universal property of \ZL{} is then relatively straightforward.
Note that the basic opens are defined by the map $D:R\to\ZL$,
sending $f:R$ to $\tmTrunc{[f]}$, the equivalence class of the singleton list $[f]$.
It then becomes straightforward to verify that for $f,g : R$
\begin{align*}
  D(g)\leq D(f)\;\Leftrightarrow\;\sqrt{\langle g\rangle}\subseteq\sqrt{\langle f\rangle} \;\Leftrightarrow\; f\in\locEl{R}{g}^\times\; \Leftrightarrow\;\func{isContr}\Big(\text{Hom}_R\big[\locEl{R}{f}\,,\,\locEl{R}{g}\,\big]\Big)
\end{align*}
The last two logical equivalences hold by some standard commutative algebra
and the universal property of localization.\footnote{As all the types above are
  propositions, we could also replace logical equivalence with
  equivalence of types \func{≃}.}
The basic opens as a subset of \ZL{} are defined as the function $\mathsf{BasicOpens}:\mathcal{L}_R\to\func{hProp}$,
sending $\mathfrak{a}$ to $\tyExists{f}{R}{\tyPath{D(f)}{\mathfrak{a}}}$.
In other words $\mathfrak{a}\in\mathsf{BasicOpens}$ if there merely exists an
$f$ such that $\mathfrak{a}$ equals $D(f)$. The type of basic opens is then
the type $\BO=\tySigma{\mathfrak{a}}{\ZL}{\mathfrak{a}\in\mathsf{BasicOpens}}$.
Note that by the universal property, the only lattice morphism
$\ZL\to\ZL$ commuting with $D$ is the identity and from this it follows that
for any list $\alpha=[\alpha_0,\dots,\alpha_n]$ the equivalence class $\tmTrunc{\alpha}$
is the finite join $\bigvee_{i=0}^n D(\alpha_i)$.
Since being a basis is a proposition, this is enough to prove
that the basic opens form a basis of \ZL.

\section{Category theory}
\label{CatTheory}

We now turn to category theory and describe the machinery needed to lift sheaves
from the basis of a distributive lattice to the whole lattice. The lifting of a presheaf defined
on a subset of a distributive lattice, seen as a sub-poset category,
is obtained by taking the right Kan extension along the inclusion.
The general theory of limits and Kan extensions  in the formalization
closely follows Mac Lane \cite{MacLaneCategories}.
We will not discuss details here, but only sketch the lattice case in order to introduce
notation and show where size issues enter the picture.

Note that for any category $\mathcal{C}$ and $P:\mathcal{C}\to\func{hProp}$,
$\mathcal{C}_P=\tySigma{x}{\mathcal{C}}{x \in P}$ becomes a subcategory of $\mathcal{C}$
by taking arrows between pairs to be arrows between the first projections.
The projection $\field{fst}$ induces a fully faithful embedding of
$\mathcal{C}_P$ into $\mathcal{C}$.
Let us now fix a distributive lattice $L : \Type~\ell$.
For any $P:L\to\hProp~\ell$, $L_P$ becomes a sub-poset of $L$.

Let $\mathcal{C}$ be an $\ell$-complete category
(\ie\ with limits of diagrams in $\Type~\ell$).
The right Kan extension then exists for any $\mathcal{C}$-valued presheaf $\mathcal{G}$
on $L_P$:
\[
\begin{tikzcd}
  \big(L_P\big)^{op}\arrow[d,hook,"\field{fst}"'] \arrow[rrd, "\mathcal{G}", end anchor={[yshift=0.3em]}]&&\\
  L^{op}\arrow[rr,dashed,"\mathsf{Ran}~\mathcal{G}"'] &&\mathcal{C}
\end{tikzcd}
\quad \big(\mathsf{Ran}~\mathcal{G}\big)\,(x) \;=\;\lim_{\longleftarrow}~\big\{\, \mathcal{G}(u) \to \mathcal{G}(v)~\vert~u,v:L_P\text{ s.t. } v\leq u\leq x\,\big\}
\]
Moreover, since the functor induced by \field{fst} is fully faithful,
$\mathsf{Ran}~\mathcal{G}$ extends $\mathcal{G}$ in the sense that we have a
natural isomorphism between $\mathcal{G}$ and $\big(\mathsf{Ran}~\mathcal{G})\circ\field{fst}$.
For the structure sheaf we need to consider presheaves valued in $\func{CommRing}~\ell$,
the category of commutative rings living in the same universe as the base ring $R$.
This category is  $\ell$-complete but \emph{not} $(\ell+1)$-complete.
It is precisely for this reason that  we required \ZL{} to be in $\Type~\ell$.

The main result of this section is that taking the right Kan extension of a
presheaf defined on the \emph{basis} of a lattice preserves
the sheaf property.\footnote{In fact the right Kan extension (as opposed to left Kan)
  establishes an equivalence of categories
  between sheaves on a lattice $L$ and sheaves on a basis $B$ of $L$, with
  its inverse being restriction to $B$. This is the special case of the
  so-called \emph{comparison lemma} for distributive lattices.}
This requires a definition of sheaf
on both distributive lattices and their bases suitable for formalization.
For the remainder of this section we fix a basis $B$ of $L$.
When outlining the formalization, we defined sheaves
on lattices by restricting the usual definition in terms of equalizer diagrams
to finite covers. However, we can express these equalizers
as finite limits over diagrams of a certain shape.\footnote{See e.g.\ Mac Lane \cite[Thm. V.2.1]{MacLaneCategories}.}
This approach is also taken by Coquand, Lombardi and Schuster in \cite{ConstrSchemes}.
We decided to follow it as it allows one to work
with special data types for the shapes of the diagrams involved, which is convenient in the formalization.

\begin{definition}[Sheaf diagram shapes]\label{DLShfDiagCat}
  The category of the sheaf diagram shape for covers of size $n$, has as objects
  indices $i$, where $1\leq i \leq n$, or pairs of indices $(i,j)$, where
  $1\leq i<j\leq n$. Arrows are either identity arrows or inclusions
  of singleton indices from the left $i\mapsto (i,j)$ or right $j\mapsto (i,j)$.
\end{definition}
In \Agda the objects and arrows can be described as the terms of the following
data types:
\ExecuteMetaData[agda/latex/CategoryTheoryCode.tex]{DLShfDiagOb}
\vspace{-.5cm}
\ExecuteMetaData[agda/latex/CategoryTheoryCode.tex]{DLShfDiagHom}
Here \func{Fin} \var{n} is the finite type of $n$ elements
from $1$ to $n$. Composition
is easily defined by case analysis as it is not possible to
compose two non-identity arrows and the laws
then follow directly. We denote the resulting category by \func{DLShfDiagCat} \var{n}.

\begin{remark}
  In order for this to define a category in HoTT/UF
  we have to prove that the hom-types are sets, \ie\ that for $x,y:\func{DLShfDiagOb}~n$ we have
  $\func{isSet}~(\func{DLShfDiagHom}~n~x~y)$.
  This follows from a retraction argument using the encode-decode method \cite{HoTTBook}.
\end{remark}
Given a list of elements $\alpha = [\alpha_1,\dots,\alpha_{n}]$ with $\alpha_i : L$, we get a corresponding
diagram in the form of a functor $\func{DLShfDiagCat}~n\to L^{op}$
sending the singleton index $i$ to $\alpha_i$ and $(i,j)$
to $\alpha_i\wedge \alpha_j$. We call this the \emph{diagram associated to
  $\alpha$}.
  Furthermore, let $\mathcal{F}:L^{op}\to\mathcal{C}$ be a presheaf,
  we then have a diagram $\func{DLShfDiagCat}~n\to\mathcal{C}$,  obtained
  by composing the diagram associated to $\alpha$ with $\mathcal{F}$.
  We call this the \emph{$\mathcal{F}$-diagram associated to $\alpha$}.

The join $\bigvee_{i=1}^{n}\alpha_i$ induces a cone over the
diagram associated to $\alpha$ and it is in fact a limiting cone because limits are least upper bounds in
the opposite of a poset category.
A presheaf on $L$ is a sheaf if it preserves these limits:

\begin{definition}[Sheaves on a distributive lattice]\label{SheafProp}
  We say that $\mathcal{F}$ is a sheaf on the distributive lattice $L$,
  if for all lists $\alpha=[\alpha_1,\dots,\alpha_{n}]$ with $\alpha_i:L$
  the induced cone of $\mathcal{F}\big(\bigvee_{i=1}^{n}\alpha_i\big)$ over
  the $\mathcal{F}$-diagram associated to $\alpha$
  is a limiting cone.
  In other words $\mathcal{F}\big(\bigvee_{i=1}^{n}\alpha_i\big)$ is the
  limit of the diagram
  \[
  \begin{tikzcd}
    &\mathcal{F}\big(\bigvee_{i=1}^{n}\alpha_i\big)\ar[dl]\ar[d]\ar[dr]&\\
    \mathcal{F}(\alpha_i)\ar[r]&\mathcal{F}(\alpha_i\wedge\alpha_j)&\mathcal{F}(\alpha_j)\ar[l]
  \end{tikzcd}\quad\quad\quad\text{for all}\quad1\leq i<j\leq n.
  \]
\end{definition}
We now turn our attention to the corresponding notion for the basis
$B$. Let $\mathcal{G}:B^{op}\to\mathcal{C}$ be a presheaf. For a list
$\alpha=[\alpha_1,\dots,\alpha_{n}]$ with $\alpha_i : B$, we have a
diagram $\func{DLShfDiagCat}~n\to\mathcal{C}$, which is obtained by
composing the diagram associated to $\alpha$ with $\mathcal{G}$.  We
call this the \emph{$\mathcal{G}$-diagram associated to $\alpha$}.
As $B$ is in general not closed under finite joins, the definition of a
basis-sheaf below has an extra condition, saying that limits of the
associated diagrams are only preserved if they exist.

\begin{definition}[Sheaves on a basis of a distributive lattice]\label{SheafPropBasis}
  We say that $\mathcal{G}$ is a sheaf on the basis B of a distributive lattice,
  if for all $\alpha=[\alpha_1,\dots,\alpha_{n}]$ with $\alpha_i : B$,
  such that $\bigvee_{i=1}^{n}\alpha_i$ is in $B$,
  the induced cone of $\mathcal{G}\big(\bigvee_{i=1}^{n}\alpha_i\big)$ over the
  $\mathcal{G}$-diagram associated to $\alpha$ is a limiting cone.
\end{definition}
The following lemma only holds for sheaves on the whole lattice,
since it requires closure under finite joins.
\begin{lemma}\label{PBreduce}
  Let $\mathcal{F}:L^{op}\to\mathcal{C}$, then $\mathcal{F}$ is sheaf
  if and only if $\mathcal{F}(\bot)$ is terminal in $\mathcal{C}$ and
  for all $x,y:L$ the following is a pullback square
  \[
    \begin{tikzcd}
      \mathcal{F}(x\vee y) \arrow[r]\pbsign{dr}\arrow[d] &\mathcal{F}(x) \arrow[d] \\
      \mathcal{F}(y) \arrow[r] &\mathcal{F}(x\wedge y)
    \end{tikzcd}
  \]
\end{lemma}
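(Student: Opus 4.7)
The proof splits into the two implications of the biconditional, with the forward direction essentially immediate and the backward direction proceeding by induction on the size $n$ of the cover.

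For the forward direction, assume $\mathcal{F}$ is a sheaf. Applying the sheaf property (Definition \ref{SheafProp}) to the empty list gives that $\mathcal{F}(\bot) = \mathcal{F}(\bigvee \emptyset)$ is the limit of the empty diagram in $\mathcal{C}$, which is precisely a terminal object. Applying it to a list of length $2$, say $[x,y]$, the only pair of distinct indices with $i<j$ is $(1,2)$, so the diagram is exactly the cospan $\mathcal{F}(x)\to\mathcal{F}(x\wedge y)\leftarrow\mathcal{F}(y)$, and a limiting cone over a cospan is by definition a pullback square.

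For the backward direction, suppose $\mathcal{F}(\bot)$ is terminal and every binary join gives rise to a pullback square. We need to show that for every list $\alpha=[\alpha_1,\dots,\alpha_n]$ the induced cone over the $\mathcal{F}$-diagram associated to $\alpha$ is limiting. I would proceed by induction on $n$. The cases $n=0,1,2$ are exactly the hypotheses (the case $n=1$ reducing to the obvious fact that $\mathcal{F}(\alpha_1)$ is a limit of the one-object diagram on itself). For the induction step, let $\beta:=\bigvee_{i=2}^n\alpha_i$, so that $\bigvee_{i=1}^n\alpha_i=\alpha_1\vee\beta$. By the pullback hypothesis, $\mathcal{F}(\alpha_1\vee\beta)$ is the pullback of $\mathcal{F}(\alpha_1)\to\mathcal{F}(\alpha_1\wedge\beta)\leftarrow\mathcal{F}(\beta)$. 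By distributivity in $L$, we have $\alpha_1\wedge\beta=\bigvee_{i=2}^n(\alpha_1\wedge\alpha_i)$, so by the inductive hypothesis applied to the length $n-1$ lists $[\alpha_2,\dots,\alpha_n]$ and $[\alpha_1\wedge\alpha_2,\dots,\alpha_1\wedge\alpha_n]$, both $\mathcal{F}(\beta)$ and $\mathcal{F}(\alpha_1\wedge\beta)$ are limits of their respective associated $\mathcal{F}$-diagrams. Gluing these limits together via the pullback square then realizes $\mathcal{F}(\alpha_1\vee\beta)$ as a limit of the $\mathcal{F}$-diagram associated to $[\alpha_1,\dots,\alpha_n]$, since the cone maps out of this pullback recover exactly the restriction maps to each $\mathcal{F}(\alpha_i)$ and each $\mathcal{F}(\alpha_i\wedge\alpha_j)$.

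The main obstacle is the last gluing step: one must verify that assembling the pullback cone with the two smaller limit cones genuinely produces a limiting cone over the diagram of shape $\func{DLShfDiagCat}\;n$, and that the universal arrows and their uniqueness really match up index-by-index. Concretely, given any competing cone $(c_i, c_{i,j})$ over the length-$n$ diagram, the family $(c_2,\dots,c_n,c_{2,3},\dots,c_{n-1,n})$ is a cone over the length-$(n-1)$ diagram and hence factors through $\mathcal{F}(\beta)$, while $(c_{1,2},\dots,c_{1,n})$ factors through $\mathcal{F}(\alpha_1\wedge\beta)$; one must check compatibility of these factorizations with the cospan into $\mathcal{F}(\alpha_1\wedge\beta)$ to apply the pullback universal property. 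This is pure diagram chasing, but carrying it out formally requires careful index bookkeeping with the data types from Definition \ref{DLShfDiagCat}, and this is precisely where the concrete shape category pays off, since the only non-identity morphisms are the two inclusions $i\mapsto(i,j)$ and $j\mapsto(i,j)$ and compatibility only has to be checked along them.
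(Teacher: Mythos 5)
Your proof is correct and follows essentially the same route as the paper: the forward direction extracts the terminal and pullback conditions from the $n=0$ and $n=2$ cases of the sheaf property, and the backward direction proceeds by induction on the cover length, splitting off $\alpha_1$, applying the binary pullback hypothesis, using distributivity to identify $\alpha_1\wedge\bigvee_{i\ge 2}\alpha_i$ with $\bigvee_{i\ge 2}(\alpha_1\wedge\alpha_i)$, and invoking the inductive hypothesis on the two resulting length-$(n-1)$ lists. You spell out the gluing step and index bookkeeping in somewhat more detail than the paper's terse "one can easily check," which is where the bulk of the formalization effort actually lies.
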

\begin{proof}
  We start by observing that \cref{SheafProp} also applies
  to the empty list $[]$. The join over $[]$
  is just $\bot$ and the associated diagram
  is the ``empty'' diagram. So if $\mathcal{F}$ is a sheaf then
  $\mathcal{F}(\bot)$ is terminal. Furthermore, the pullback squares are exactly the sheaf condition
  for two element lists. This concludes the ``only if'' direction.

  For the other direction, we proceed by
  induction on the length $n$. The base case $n=0$ follows from
  $\mathcal{F}(\bot)$ being terminal. For the inductive step
  take a list $\alpha_1,\dots,\alpha_{n}:L$ of length $n$.
  By assumption the following is a pullback square
  \[
    \begin{tikzcd}
      \mathcal{F}\big(\bigvee_{i=1}^{n}\alpha_i\big) \arrow[r]\pbsign{dr}\arrow[d] &\mathcal{F}\big(\bigvee_{i=2}^{n}\alpha_i\big) \arrow[d] \\
      \mathcal{F}(\alpha_1) \arrow[r] &\mathcal{F}\big(\bigvee_{i=2}^{n}(\alpha_1\wedge\alpha_i)\big)
    \end{tikzcd}
    \]
    Now both lists  $\alpha_1,\dots,\alpha_{n}$
    and $\alpha_1\wedge\alpha_1,\dots,\alpha_1\wedge\alpha_{n}$ are of length $n-1$.
    By applying the induction hypothesis to both, one can
    easily check that $\mathcal{F}\big(\bigvee_{i=1}^{n}\alpha_i\big)$ is the
    desired limit.
  \end{proof}
This alternative characterization can be used to prove our ``comparison lemma''
for distributive lattices.
For the remainder of this section, let $\mathcal{G}:B^{op}\to\mathcal{C}$ be a sheaf on the basis $B$.
The key observation is the following technical lemma.
\begin{lemma}\label{coverLemma}
  For any list of elements $\alpha_1,\dots,\alpha_{k}:B$,
  we have that\footnote{This is actually how the extension $\big(\mathsf{Ran}~\mathcal{G}\big)$ is defined in
    \cite{ConstrSchemes}. However, in general we cannot use concrete
    covers of arbitrary elements of $L$ by base elements to construct a functor into
    $\mathcal{C}$ if its h-level is unknown.}
  \begin{align}
    \big(\mathsf{Ran}~\mathcal{G}\big)\big(\textstyle\bigvee_{i=1}^{k}\alpha_i\big)\;\cong\;
    \displaystyle\lim_{\longleftarrow}~\big\{\, \mathcal{G}(\alpha_i) \to \mathcal{G}(\alpha_i\wedge\alpha_j)\leftarrow\mathcal{G}(\alpha_j)~\vert~1\leq i<j\leq k\,\big\}
  \end{align}
\end{lemma}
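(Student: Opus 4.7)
I plan to exhibit an explicit isomorphism by constructing morphisms in both directions and invoking the universal properties of the limits involved, combined with the sheaf property of $\mathcal{G}$ on $B$. Let $x := \bigvee_{i=1}^{k}\alpha_i$, write $L_b$ for $(\mathsf{Ran}\,\mathcal{G})(x)$, and write $L_s$ for the finite limit on the right-hand side. Since $B$ is closed under meets, each $\alpha_i$ and each $\alpha_i\wedge\alpha_j$ lies in $B$ and is $\leq x$, hence already appears as an object of the indexing diagram of $L_b$. Restricting the limiting cone out of $L_b$ to just these objects produces a cone over the small sheaf diagram, and its unique factorization through $L_s$ yields the map $L_b \to L_s$.

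For the reverse map $L_s \to L_b$, I would produce a compatible family $(\psi_u : L_s \to \mathcal{G}(u))_{u\in B,\,u\leq x}$. Given such a $u$, distributivity gives $u = u\wedge x = \bigvee_{i=1}^{k}(u\wedge\alpha_i)$, where each $u\wedge\alpha_i$ is in $B$ and the join itself is $u \in B$, so by \cref{SheafPropBasis} the object $\mathcal{G}(u)$ is the limit of the $\mathcal{G}$-diagram associated to $(u\wedge\alpha_1,\dots,u\wedge\alpha_k)$. Composing the projections $L_s \to \mathcal{G}(\alpha_i)$ and $L_s \to \mathcal{G}(\alpha_i\wedge\alpha_j)$ with the restriction maps into $\mathcal{G}(u\wedge\alpha_i)$ and $\mathcal{G}(u\wedge\alpha_i\wedge\alpha_j)$ furnishes a cone over that diagram; compatibility follows from the cone property of $L_s$ together with the identity $(u\wedge\alpha_i)\wedge(u\wedge\alpha_j) = u\wedge\alpha_i\wedge\alpha_j$. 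The sheaf property then uniquely produces $\psi_u$. To see that $(\psi_u)$ is itself a cone over the big diagram, take $v\leq u$ both in $B$ with $u\leq x$: by uniqueness in the sheaf property at $v = \bigvee_i(v\wedge\alpha_i)$, it suffices to compare $\psi_v$ and the composite $L_s \xrightarrow{\psi_u} \mathcal{G}(u) \to \mathcal{G}(v)$ after postcomposing with each projection $\mathcal{G}(v) \to \mathcal{G}(v\wedge\alpha_i)$, and both sides reduce to $L_s \to \mathcal{G}(\alpha_i) \to \mathcal{G}(v\wedge\alpha_i)$ by composing restrictions.

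Mutual inverseness follows by the same technique. For $L_b \to L_s \to L_b$ one checks agreement after projecting to each $\mathcal{G}(u)$, which by the sheaf property at $u$ further reduces to agreement on each $\mathcal{G}(u\wedge\alpha_i)$, where both sides coincide by the cone property of $L_b$. For $L_s \to L_b \to L_s$, the component at $\alpha_l$ is $\psi_{\alpha_l}$, which equals the projection $L_s \to \mathcal{G}(\alpha_l)$ because both factor the cone $\big(L_s \to \mathcal{G}(\alpha_j) \to \mathcal{G}(\alpha_l\wedge\alpha_j)\big)_j$ through the sheaf-property limit for $\mathcal{G}(\alpha_l) = \mathcal{G}\big(\bigvee_j(\alpha_l\wedge\alpha_j)\big)$, and an analogous argument handles the $(i,j)$ components. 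The main obstacle I expect is the bookkeeping of these nested appeals to uniqueness in the sheaf property: what is conceptually the cofinality of the small sheaf diagram inside the Kan extension's indexing diagram becomes, in the formal \CubicalAgda proof, a cascade of cone-equality checks and restriction coherences that must be spelled out explicitly rather than black-boxed via a general cofinality lemma.
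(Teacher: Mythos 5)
Your proposal is correct and follows essentially the same route as the paper's proof: the forward map by restricting the Kan extension's limiting cone to the $\alpha_i$'s and their pairwise meets, and the inverse by applying the basis sheaf property to $u = \bigvee_i(u\wedge\alpha_i)$ for each $u\in B$ below the join, then checking functoriality/cone conditions via uniqueness. You actually spell out the mutual-inverse verifications in more detail than the paper's sketch, which simply remarks that they are cumbersome and omits them.
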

\begin{proof}[Proof sketch]
  By definition we have
  \begin{align*}
    \big(\mathsf{Ran}~\mathcal{G}\big)\big(\textstyle\bigvee_{i=1}^{k}\alpha_i\big)\;=\;\displaystyle\lim_{\longleftarrow}~\big\{\, \mathcal{G}(u) \to \mathcal{G}(v)~\vert~u,v:B\text{ s.t.\ } v\leq u\leq \textstyle\bigvee_{i=1}^{k}\alpha_i\,\big\}
  \end{align*}
  This immediately gives us the map from left to right,
  since we can restrict the defining diagram of
  $\big(\mathsf{Ran}~\mathcal{G}\big)\big(\textstyle\bigvee_{i=1}^{k}\alpha_i\big)$ to
  the $\mathcal{G}$-diagram associated to $\alpha$.

  For the inverse map we have to show that given any $X:\mathcal{C}$ with a cone
  based at $X$ over the $\mathcal{G}$-diagram associated to $\alpha$,
  we can extend this to a cone based at $X$ over the defining diagram of
  $\big(\mathsf{Ran}~\mathcal{G}\big)\big(\textstyle\bigvee_{i=1}^{k}\alpha_i\big)$.
  Assume we have $X:\mathcal{C}$ with such a cone  and
  let $u:B$ such that $u\leq\bigvee_{i=1}^{k}\alpha_i$.
  Then $\tyPath{\textstyle\bigvee_{i=1}^{k}(u\wedge\alpha_i)}{u}$ and
  hence $\textstyle\bigvee_{i=1}^{k}(u\wedge\alpha_i)$ is in $B$.
  This means that we can apply the assumption that $\mathcal{G}$ is a sheaf to this join.
  By substituting along this path, we can see $\mathcal{G}(u)$
  as the limit of the $\mathcal{G}$-diagram associated to the $u\wedge\alpha_i$'s.
  By composing with restrictions we get a cone based at $X$ over
  the $\mathcal{G}$-diagram associated to the $u\wedge\alpha_i$'s, and thus
  an arrow $X\to\mathcal{G}(u)$. It is not hard to show that this is functorial
  in $u$, which gives us the desired inverse arrow.
  The proof that the two maps are mutually inverse, is
  quite cumbersome and we will omit it here.
\end{proof}
The proof of the following theorem is the most technical of
the entire formalization, so again we only give an outline.

\begin{theorem}\label{lemma1}
  $\mathsf{Ran}~\mathcal{G}$ is a sheaf on the distributive lattice $L$.
\end{theorem}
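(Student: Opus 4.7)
The plan is to apply \cref{PBreduce}, which reduces the sheaf property to two conditions: (i) $\big(\mathsf{Ran}~\mathcal{G}\big)(\bot)$ is terminal in $\mathcal{C}$, and (ii) for all $x,y : L$ the square from \cref{PBreduce} is a pullback. Both conditions will be handled by pulling them back through a basis cover and then invoking \cref{coverLemma}.

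For (i), the empty list is vacuously a list of basis elements with $\bigvee_{i=1}^{0}\alpha_i = \bot$. Applying \cref{coverLemma} with $k=0$, we obtain that $\big(\mathsf{Ran}~\mathcal{G}\big)(\bot)$ is the limit of the empty diagram in $\mathcal{C}$, hence the terminal object.

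For (ii), fix $x,y:L$ and, using that $B$ is a basis, choose finite lists $\alpha_1,\dots,\alpha_m$ and $\beta_1,\dots,\beta_n$ in $B$ with $x=\bigvee_i\alpha_i$ and $y=\bigvee_j\beta_j$; since being a pullback is a proposition, we may eliminate the propositional truncations hidden in the basis property. The concatenation $[\alpha_1,\dots,\alpha_m,\beta_1,\dots,\beta_n]$ is a basis cover of $x\vee y$, and by distributivity together with closure of $B$ under meets, the list $[\alpha_i\wedge\beta_j]_{i,j}$ is a basis cover of $x\wedge y$. Applying \cref{coverLemma} at all four corners expresses each of $\big(\mathsf{Ran}~\mathcal{G}\big)(x)$, $\big(\mathsf{Ran}~\mathcal{G}\big)(y)$, $\big(\mathsf{Ran}~\mathcal{G}\big)(x\vee y)$ and $\big(\mathsf{Ran}~\mathcal{G}\big)(x\wedge y)$ as an explicit sheaf-style limit of a $\mathcal{G}$-diagram.

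What remains is a purely categorical calculation: showing that the limit for the concatenated cover is the pullback of the two sub-cover limits over the meet-cover limit. Unpacking the universal property with respect to a test object $X:\mathcal{C}$, a cone based at $X$ over the $\mathcal{G}$-diagram for the concatenated cover consists of maps $X\to\mathcal{G}(\alpha_i)$ and $X\to\mathcal{G}(\beta_j)$ satisfying compatibility on all pairwise meets $\alpha_i\wedge\alpha_{i'}$, $\beta_j\wedge\beta_{j'}$, and $\alpha_i\wedge\beta_j$. The first two families of compatibilities are precisely the cone data for $\big(\mathsf{Ran}~\mathcal{G}\big)(x)$ and $\big(\mathsf{Ran}~\mathcal{G}\big)(y)$, while the mixed compatibilities $\alpha_i\wedge\beta_j$ are exactly the assertion that the two induced maps into $\big(\mathsf{Ran}~\mathcal{G}\big)(x\wedge y)$ agree. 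This gives the required bijection of cones and hence the pullback property.

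The main obstacle is the bookkeeping for this last step: constructing the comparison arrow from the $(x\vee y)$-limit to the proposed pullback, reconstructing a cone over the concatenated diagram from a pair of compatible cones on the two sub-diagrams, and verifying that these assignments are mutually inverse and natural. As in \cref{coverLemma}, the cleanest presentation works via arrows out of a test object so that both sides admit an explicit description, but the uniqueness arguments and the verification of the cone laws for the mixed indices $\alpha_i\wedge\beta_j$ are expected to be the most delicate part, and in the formalization they will require sustained diagram chasing rather than any new conceptual idea.
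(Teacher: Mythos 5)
Your proposal takes essentially the same approach as the paper's proof: reduce via \cref{PBreduce} to the terminal/pullback conditions, eliminate the propositional truncation in the basis property (since being a pullback is a proposition), and then apply \cref{coverLemma} to the concatenated cover of $x\vee y$ and to the two sub-covers of $x$ and $y$, constructing the induced arrow from a compatible pair of cones and arguing uniqueness by a cone-morphism diagram chase. The only cosmetic difference is that you invoke \cref{coverLemma} explicitly at $x\wedge y$ via the cover $[\alpha_i\wedge\beta_j]_{i,j}$ to phrase the mixed-compatibility condition, whereas the paper reads that condition off directly from the commutativity of the outer square; these amount to the same calculation.
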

\begin{proof}[Proof sketch]
  It suffices to check the terminal and pullback condition of \cref{PBreduce}.
  We will restrict our attention to the pullback case here. Let $x,y:L$ and
  note that, as being a pullback square is a proposition,
  we can take covers $\tyPath{x}{\bigvee_{i=1}^{n}\beta_i}$
  and $\tyPath{y}{\bigvee_{i=1}^{m}\gamma_i}$
  by base elements, i.e.\ $\beta_i,\gamma_j: B$  for all $i$ and $j$. Substituting
  these covers for $x$ and $y$,
  we have to prove the following:
  given $X:\mathcal{C}$ and arrows $f$ and $g$ such that the outer square in the diagram below
  commutes, then there is a unique arrow $h$ making the whole diagram commute:
  \begin{equation}\label{bigDiag}
  \begin{tikzcd}
    X \arrow[dr, dashed, "\exists!~h"]\arrow[drr,bend left=20, "f"]\arrow[ddr,bend right, "g"']&&\\
      &\big(\mathsf{Ran}~\mathcal{G}\big)\Big(\bigvee_{i=1}^{n+m}(\plusplus{\beta}{\gamma})_i\Big) \arrow[r]\arrow[d] &\big(\mathsf{Ran}~\mathcal{G}\big)\Big(\bigvee_{i=1}^{n}\beta_i\Big) \arrow[d] \\
      &\big(\mathsf{Ran}~\mathcal{G}\big)\Big(\bigvee_{i=1}^{m}\gamma_i\Big) \arrow[r] &\big(\mathsf{Ran}~\mathcal{G}\big)\Big(\big(\bigvee_{i=1}^{n}\beta_i\big)\wedge \big(\bigvee_{i=1}^{m}\gamma_i\big)\Big)
    \end{tikzcd}
  \end{equation}
  Here $(\plusplus{\beta}{\gamma})$ is the list-concatenation of $\beta$ and $\gamma$.
  Applying \cref{coverLemma} to $(\plusplus{\beta}{\gamma})$, we get such an arrow $h$
  from a cone based at $X$ over the diagram
  \[
  \big\{\, \mathcal{G}\big((\plusplus{\beta}{\gamma})_i\big) \to \mathcal{G}\big((\plusplus{\beta}{\gamma})_i\wedge(\plusplus{\beta}{\gamma})_j\big)\leftarrow\mathcal{G}\big((\plusplus{\beta}{\gamma})_j\big)~\vert~1\leq i<j\leq n+m\,\big\}
  \]
  To construct such a cone, we apply \cref{coverLemma} to both $\beta$ and $\gamma$
  and precompose the resulting limiting cones with $f$ and $g$ respectively.
  This gives us two cones based at $X$, one over the
  $\mathcal{G}$-diagram associated to $\beta$
  and the other one over the
  $\mathcal{G}$-diagram associated to $\gamma$.
  Note that the two cones are compatible in the following sense:
  for all $1\leq i\leq n$ and $1\leq j\leq m$ the following square commutes
  \[
  \begin{tikzcd}
      X \arrow[r]\arrow[d] &\mathcal{G}(\beta_i) \arrow[d] \\
      \mathcal{G}(\gamma_j) \arrow[r] &\mathcal{G}(\beta_i\wedge \gamma_j)
  \end{tikzcd}
  \]
  This is because the outer square in diagram (\ref{bigDiag}) commutes
  and it is sufficient to construct a cone based at $X$ over the
  $\mathcal{G}$-diagram associated to $(\plusplus{\beta}{\gamma})$.

  Note that the induced $h$ is the unique cone morphism between the cone thus constructed
  and the limiting cone obtained from applying \cref{coverLemma} to
  $(\plusplus{\beta}{\gamma})$. Moreover, $f$ and $g$ are the unique cone morphisms
  between their respective precomposition-cones based at $X$ and the limiting cones
  obtained from applying \cref{coverLemma} to $\beta$ and $\gamma$ respectively.
  From this it follows by a cumbersome diagram chase that $h$ is the unique morphism
  making the two triangles in diagram (\ref{bigDiag}) commute.
\end{proof}
Formalizing the gaps in the above proof sketches is quite tedious and
uses involved transports. We refer the interested reader to the
formalization.

\section{The structure sheaf}
\label{sec:structuresheaf}

We now have all the ingredients needed to formalize the structure sheaf.
The basic opens \BO~form a basis of $\mathcal{L}_R$
and we have seen in the previous section how sheaves can be extended along the embedding
$\field{fst}:\mathcal{B}_R\to \mathcal{L}_R$.
What should the structure sheaf on $\mathcal{B}_R$ then look like?
Focusing on the underlying presheaf and its action on objects for now,
we need a function $\mathcal{B}_R\to\func{CommRing}~\ell$,
which upon unfolding the definition of $\mathcal{B}_R$ becomes
\begin{align*}
  \big(\tySigmaNoParen{\mathfrak{a}}{\mathcal{L}_R}
          {\underbrace{\tyExists{f}{R}{\tyPath{D(f)}{\mathfrak{a}}}}_{\text{prop. trunc.}}}\big)
          \;\longrightarrow\;\underbrace{\func{CommRing}~\ell}_{\text{groupoid}}
\end{align*}
Since membership in $\mathcal{B}_R$ is defined as a mere existence condition
using propositional truncation, we can only specify the behavior of the structure sheaf
in the case where we are given a point constructor of this truncation.
If $\mathfrak{a}:\ZL$ is a basic open, such an element of the truncation
consists of an element $f:R$ and a path $p:\tyPath{D(f)}{\mathfrak{a}}$.
In this case we know that the structure sheaf should
send $(\,\tmPair{\mathfrak{a}}{\tmPtrunc{\tmPair{f}{p}}}\,)$ to $\locEl{R}{f}$.
If the goal type were a proposition, this would be enough to specify a function.
However, the type of commutative rings
is a groupoid, requiring us to construct some non-trivial higher coherences.

To circumvent this problem we use the observation that the localizations
are actually $R$-algebras and that we could regard the structure sheaf as taking values
in $R$-algebras. What is usually called the structure sheaf in the literature is this
$R$-algebra-valued sheaf composed with the forgetful functor to commutative rings.
In other words, the structure sheaf factors through the forgetful functor from
$R$-algebras to commutative rings. The single reason why the situation is more well-behaved in
$R$-algebras is the fact that
\begin{align*}
  D(g)\leq D(f) \quad\Longleftrightarrow\quad\func{isContr}\Big(\text{Hom}_R\big[\locEl{R}{f}\,,\,\locEl{R}{g}\,\big]\Big)
\end{align*}
Contractibility is a powerful concept in HoTT/UF and we will show how this can be
used to solve the coherence issues of the structure sheaf and gives rise to a reduction
argument for the sheaf property.
We start with two lemmas for general constructions involving propositional
truncations and $R$-algebras. Note that these results are pretty much tailored
to the situation of the structure sheaf, but should also hold for other univalent categories,
which are always groupoids and even sets if they are posetal \cite[Lemma 9.1.9, Ex. 9.1.14]{HoTTBook}. With a bit of abuse of notation we will use $\RAlg$ to denote both
the type and the category of $R$-algebras.
\begin{lemma}\label{recPTtoCommAlgebra}
  Let $X:\Type$ and $\mathcal{F}:X\to\RAlg$.
  Assume further that for $x,y:X$ we have an isomorphism of $R$-algebras
  $\varphi_{xy}:\mathcal{F}(x)\cong\mathcal{F}(y)$ such that
  for $x,y,z:X$ we have a path $\tyPath{\varphi_{xz}}{\varphi_{yz}\circ\varphi_{xy}}$.
  Then we can construct a map $\norm{\mathcal{F}}: \ptrunc{X}\to\RAlg$
  such that for $x:X$ we have
  $\norm{\mathcal{F}}\big(\,\tmPtrunc{x}\,\big)=\mathcal{F}(x)$ definitionally.
\end{lemma}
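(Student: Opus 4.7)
The plan is to reduce the construction to a standard factorization lemma for maps from propositional truncations into 1-types (groupoids), using the SIP to supply the required coherent data for $R$-algebras.

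First, I would apply $\func{sip}$ to each isomorphism $\varphi_{xy}$ to obtain a path $p_{xy} : \tyPath{\mathcal{F}(x)}{\mathcal{F}(y)}$ in $\RAlg$. Since $\func{sip}$ is compatible with the groupoid structure of $R$-algebras---sending identity isomorphisms to reflexivity and composition of isomorphisms to concatenation of paths---the assumed composition coherence $\tyPath{\varphi_{xz}}{\varphi_{yz}\circ\varphi_{xy}}$ translates into a 2-constancy coherence on these paths: for all $x,y,z : X$ one obtains $\tyPath{p_{xz}}{p_{xy}\cdot p_{yz}}$, where $\cdot$ denotes path concatenation.

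Next, I would invoke the following standard fact (essentially Kraus's construction): to define a map $\ptrunc{X}\to Y$ into a 1-type $Y$, it suffices to give a map $f : X \to Y$ together with a family of paths $f(x) \equiv f(y)$ satisfying the 2-constancy coherence above, and the resulting factorization sends the point constructor $\tmPtrunc{x}$ to $f(x)$ by construction. Since $\RAlg$ is a 1-type (as a univalent category of set-level structures), applying this to $\mathcal{F}$ with the coherent family $\{p_{xy}\}$ yields the desired map $\norm{\mathcal{F}}$, and the definitional equation $\norm{\mathcal{F}}(\tmPtrunc{x}) = \mathcal{F}(x)$ is immediate.

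The main obstacle is that $\RAlg$ is a groupoid rather than a set, so the ordinary recursion principle for propositional truncation does not apply directly. The hypotheses of the lemma are tailored precisely to supply the extra coherence needed to factor through $\ptrunc{X}$ into a 1-type, and univalence (via $\func{sip}$) is what makes the purely categorical isomorphism data usable as path data for this factorization. The formal prerequisite on the library side is thus the availability of the "coherently-constant-into-1-type" factorization principle in \agdaCubical, after which the translation of the hypothesis through $\func{sip}$ is routine.
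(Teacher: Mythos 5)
Your proposal is correct and takes essentially the same route as the paper: apply $\func{sip}$ to the isomorphism family, use functoriality of $\func{sip}$ (derived from that of $\func{ua}$) to convert the compositional coherence on isomorphisms into the required $2$-constancy coherence on paths, and then invoke Kraus's factorization principle for maps from propositional truncations into groupoids (the paper cites Kraus, Prop.~2.3). The only cosmetic difference is that the paper names the specific reference rather than describing it generically.
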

\begin{proof}
  Since $\RAlg$ is an groupoid, we can
  apply a result by Kraus \cite[Prop. 2.3]{KrausPropTrunc}.
  In order to construct $\norm{\mathcal{F}}$ we need a family of paths
  over any two elements of $X$ satisfying a certain coherence condition.
  For $x,y:X$ we get a path $\func{sip}~\varphi_{xy}:\tyPath{x}{y}$.
  The corresponding coherence condition states that for $x,y,z:X$,
  we need a path
  $\tyPath{\func{sip}~\varphi_{xz}}{\func{sip}~\varphi_{xy}~\func{∙}~\func{sip}~\varphi_{yz}}$ (were \func{\_∙\_} is path composition).
  By the functoriality of \func{sip},
  which follows from the functoriality of \func{ua},
  this path type is equivalent to
  $\tyPath{\func{sip}~\varphi_{xz}}{\func{sip}~(\varphi_{yz}\circ\varphi_{xy})}$.
  But by assumption we have
  $\tyPath{\varphi_{xz}}{\varphi_{yz}\circ\varphi_{xy}}$, so by applying \func{sip}
  to this path
  we are done.
\end{proof}
For the next lemma, note that for any category $\mathcal{C}$ and
family $P:\mathcal{C}\to\Type$, we have the subcategory $\mathcal{C}_{\norm{P}}$
of $\mathcal{C}$ induced by $\lambda~x\to\ptrunc{P(x)}:\mathcal{C}\to\hProp$.
\begin{lemma}\label{universalPShf}
  Let $\mathcal{C}$ be a category with a family $P:\mathcal{C}\to\Type$
  and a family of $R$-algebras
  $\mathcal{F}:\big(\tySigmaNoParen{x}{\mathcal{C}}{P(x)}\big)\to \RAlg$.
  Assume furthermore that for $x,y:\mathcal{C}$, $p:P(x)$, $q:P(y)$
  with an arrow $f:\mathcal{C}\,[x,y]$ we have
  \begin{align*}
    \func{isContr}\Big(\text{Hom}_R\big[\mathcal{F}(\tmPair{y}{q})\,,\,\mathcal{F}(\tmPair{x}{p})\,\big]\Big)
  \end{align*}
  We can then construct a ``universal'' presheaf
  \begin{align*}
    \universalPShf:\big(\mathcal{C}_{\norm{P}}\big)^{op}\to \RAlg
  \end{align*}
  such that for  $x:\mathcal{C}$ with $p:P(x)$ we have
  \begin{align*}
    \universalPShf(\tmPair{x}{\tmPtrunc{p}})=\mathcal{F}(\tmPair{x}{p})
  \end{align*}
  and for $y:\mathcal{C}$, $q:P(y)$ with arrow $f:\mathcal{C}\,[x,y]$,
  $\universalPShf\,(f)$ is the unique  $R$-algebra morphism
  from $\mathcal{F}(\tmPair{y}{q})$ to $\mathcal{F}(\tmPair{x}{p})$.
\end{lemma}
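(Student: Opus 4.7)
The plan is to build $\universalPShf$ in two layers: first the object part, via Lemma~\ref{recPTtoCommAlgebra}, then the morphism part, by extracting unique morphisms from the contractible hom-types hypothesised in the statement. Functoriality then drops out of contractibility, since parallel morphisms living in a contractible hom-type are automatically equal.

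For the object part, fix $x : \mathcal{C}$ and construct $\universalPShf(x, -) : \ptrunc{P(x)} \to \RAlg$ by instantiating Lemma~\ref{recPTtoCommAlgebra} with $X := P(x)$ and the family $\lambda p.\, \mathcal{F}(x,p)$. This requires, for all $p, q : P(x)$, an $R$-algebra isomorphism $\varphi_{pq} : \mathcal{F}(x,p) \cong \mathcal{F}(x,q)$ together with the cocycle condition $\varphi_{pr} = \varphi_{qr} \circ \varphi_{pq}$. I would obtain $\varphi_{pq}$ by applying the contractibility hypothesis with $f := \mathrm{id}_x$: in both directions this makes $\text{Hom}_R\big[\mathcal{F}(x,q), \mathcal{F}(x,p)\big]$ contractible, giving unique morphisms in each direction. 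Their compositions inhabit $\text{Hom}_R\big[\mathcal{F}(x,p), \mathcal{F}(x,p)\big]$, which is contractible and contains the identity, so the compositions must equal the identity, making $\varphi_{pq}$ an isomorphism. The cocycle condition follows because both $\varphi_{pr}$ and $\varphi_{qr} \circ \varphi_{pq}$ lie in the contractible hom $\text{Hom}_R\big[\mathcal{F}(x,p), \mathcal{F}(x,r)\big]$. By the definitional computation promised by Lemma~\ref{recPTtoCommAlgebra}, $\universalPShf(x, \tmPtrunc{p}) = \mathcal{F}(x,p)$ on point constructors.

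For the morphism part, given $f : \mathcal{C}\,[x,y]$ and $t : \ptrunc{P(x)}$, $t' : \ptrunc{P(y)}$, I would define $\universalPShf(f) : \universalPShf(y, t') \to \universalPShf(x, t)$ by first showing that the entire hom-type $\text{Hom}_R\big[\universalPShf(y, t'), \universalPShf(x, t)\big]$ is contractible. Since contractibility is itself a proposition, I can eliminate the propositional truncations of $t$ and $t'$ down to point constructors $\tmPtrunc{p}$ and $\tmPtrunc{q}$. Once they are point constructors, the definitional equations from the previous step reduce the goal to the hypothesis, namely contractibility of $\text{Hom}_R\big[\mathcal{F}(y,q), \mathcal{F}(x,p)\big]$. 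Then $\universalPShf(f)$ is defined as the center of this contraction, and in particular equals the unique morphism $\mathcal{F}(y,q) \to \mathcal{F}(x,p)$ on point constructors, as required. Functoriality (preservation of identities and of composition) holds because both sides of each equation lie in a contractible hom.

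The main obstacle I anticipate is organising the morphism-level construction so that truncation elimination is actually applicable. The trick is to phrase the thing to be built---the morphism itself---indirectly, as the center of a contractible hom-type, so that the statement to prove is a proposition. Trying to pick a representative $p : P(x)$ out of $t : \ptrunc{P(x)}$ directly to define the morphism would not typecheck; instead the contractibility of the whole hom-type acts as a propositional black box that justifies the choice without ever exposing it. The same contractibility argument then absorbs all higher coherences that would otherwise need to be laboriously verified.
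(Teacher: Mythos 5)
Your proof is correct and follows essentially the same strategy as the paper's own: construct the object part via Lemma~\ref{recPTtoCommAlgebra}, then define the morphism part by first establishing contractibility of the relevant hom-type (a proposition, so truncation elimination applies) and taking the center of contraction, with functoriality falling out of contractibility. The only cosmetic difference is that you spell out in slightly more detail how contractibility yields the isomorphisms and cocycle condition needed for Lemma~\ref{recPTtoCommAlgebra}, which the paper leaves more terse.
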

\begin{proof}
  We first describe the action of \universalPShf~on objects.
  By currying we fix $x:\mathcal{C}$ and need to provide a function
  $\ptrunc{P(x)}\to\RAlg$. For this we apply \cref{recPTtoCommAlgebra}
  to $\mathcal{F}(\tmPair{x}{\_}):P(x)\to\RAlg$.
  From our contractibility assumption it follows that given $p,q:P(x)$
  there are unique morphisms from $\mathcal{F}(\tmPair{x}{p})$ to
  $\mathcal{F}(\tmPair{x}{q})$ and vice versa,
  so
  $\mathcal{F}(\tmPair{x}{p})\cong \mathcal{F}(\tmPair{x}{q})$.
  It remains to check that the family of isomorphisms thus defined
  is closed under composition in the sense of \cref{recPTtoCommAlgebra}.
  Again, this follows from contractibility.

  For the action of \universalPShf~on morphisms, we start by proving something stronger.
  Given $x,y:\mathcal{C}$, $p:\ptrunc{P(x)}$, $q:\ptrunc{P(y)}$ with
  an arrow $f:\mathcal{C}[x,y]$, we have:
  \begin{align*}
    \func{isContr}~\Big(\text{Hom}_R\,\big[\,\universalPShf(\tmPair{x}{p})~,~\universalPShf(\tmPair{y}{q})\,\big]\Big)
  \end{align*}
  As being contractible is a proposition, we can assume that $p=\tmPtrunc{p'}$ and $q=\tmPtrunc{q'}$.
  In this case $\universalPShf(\tmPair{x}{p})=\mathcal{F}(\tmPair{y}{p'})$ and
  $\universalPShf(\tmPair{y}{q})=\mathcal{F}(\tmPair{y}{q'})$ and we can just use
  our contractibility hypothesis. Since a morphism between $(\tmPair{x}{p})$ and
  $(\tmPair{y}{q})$ in $\mathcal{C}_{\norm{P}}$
  is just a morphism $f:\mathcal{C}[x,y]$, we can take $\universalPShf(f)$
  to be the center of contraction of the contractible type of $R$-algebra morphisms above.
  The functoriality of \universalPShf~then follows immediately.
\end{proof}
We now want to apply this construction to the Zariski lattice (seen as a poset category).
In the situation of \cref{universalPShf} with $\mathcal{C}=\mathcal{L}_R$
we set, for $\mathfrak{a}:\mathcal{L}_R$:
\begin{align*}
  P(\mathfrak{a})\;=\; \tySigma{f}{R}{\tyPath{D(f)}{\mathfrak{a}}}
  \quad\quad\text{and}\quad\quad
  \mathcal{F}(\tmPair{\mathfrak{a}}{\tmPair{f}{p}})\;&=\;\locEl{R}{f}.
\end{align*}
If we're given $\mathfrak{b}\leq\mathfrak{a}$ with $\tyPath{D(f)}{\mathfrak{a}}$
and $\tyPath{D(g)}{\mathfrak{b}}$ then $D(g)\leq D(f)$ and the
type of $R$-algebra morphisms from $\locEl{R}{f}$ to $\locEl{R}{g}$ is contractible.
This way we obtain the desired
\begin{align*}
  \universalPShf:\big(\mathcal{B}_R\big)^{op}\to\RAlg
\end{align*}
Composing with the forgetful functor from $R$-algebras to commutative rings gives us the desired
presheaf on basic opens, denoted by \func{𝓞ᴮ}. From this we finally obtain the
structure (pre-)sheaf $\func{𝓞} : \big(\mathcal{L}_R\big) ^{op} \to \func{CommRing}$
using the right Kan extension machinery described in \cref{CatTheory}.
The following fact then becomes rather straightforward to verify:
\begin{proposition}\label{baseSections}
  For any $f:R$ we get a path $\func{𝓞}\big(D(f)\big)~\func{≡}~\locEl{R}{f}$.
\end{proposition}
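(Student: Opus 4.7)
My plan is to combine two ingredients established earlier: the definitional behavior of $\universalPShf$ on point constructors of the propositional truncation, and the fact that the right Kan extension along the fully faithful inclusion $\field{fst}:\mathcal{B}_R\hookrightarrow\mathcal{L}_R$ extends the original presheaf up to a natural isomorphism.

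The first step is to pick the canonical basic-open representative of $D(f)$, namely $b := (D(f),\tmPtrunc{\tmPair{f}{\mathsf{refl}_{D(f)}}}) : \mathcal{B}_R$. Recalling how $\universalPShf$ was built in the proof of \cref{universalPShf}, together with the ``$\norm{\mathcal{F}}(\tmPtrunc{x}) = \mathcal{F}(x)$ definitionally'' clause of \cref{recPTtoCommAlgebra}, I get $\universalPShf(b) = \locEl{R}{f}$ as $R$-algebras on the nose. Post-composing with the forgetful functor to $\func{CommRing}~\ell$, the same identification holds for $\func{𝓞ᴮ}(b)$.

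Next, I would invoke the natural isomorphism $(\mathsf{Ran}\,\func{𝓞ᴮ})\circ\field{fst}\cong\func{𝓞ᴮ}$ recalled at the start of \cref{CatTheory} (which holds because $\field{fst}$ is fully faithful). Instantiating its component at $b$, and using that $\field{fst}(b) = D(f)$, produces a commutative ring isomorphism $\func{𝓞}(D(f))\cong\locEl{R}{f}$. Applying $\func{sip}$ to this isomorphism turns it into the desired path.

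The only step I anticipate being fiddly is the ``definitional'' one: it relies on \cref{recPTtoCommAlgebra} delivering $\norm{\mathcal{F}}(\tmPtrunc{x}) = \mathcal{F}(x)$ up to judgmental, not merely propositional, equality in \CubicalAgda, so that no hidden transport pollutes the final statement. If definitional equality happens to fail, the fix is routine: one precomposes the Kan-extension isomorphism with the unique $R$-algebra isomorphism handed out by \cref{LocChar}, and feeds the resulting ring isomorphism into $\func{sip}$. Either way, no new mathematical content is needed beyond what is already in \cref{universalPShf,CatTheory}.
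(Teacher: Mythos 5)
Your proposal is correct and is essentially the paper's own proof: both pick the canonical witness $p_f = \tmPtrunc{\tmPair{f}{\func{refl}}}$, use the natural isomorphism $\func{𝓞}\circ\field{fst}\cong\func{𝓞ᴮ}$ coming from the fully faithful embedding together with the SIP, and appeal to the definitional clause of \cref{recPTtoCommAlgebra} to identify $\universalPShf\big(\tmPair{D(f)}{p_f}\big)$ with $\locEl{R}{f}$. You even correctly anticipate the one subtle point the paper flags: the equality is definitional in $R$-algebras, but after applying the forgetful functor one does not get $\locEl{R}{f}$ as a commutative ring by $\func{refl}$.
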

\begin{proof}
  There is a canonical proof $p_f =\tmPtrunc{\tmPair{f}{\func{refl}}}$ of $D(f)$ belonging to the basic opens. Since we have a natural isomorphism
  between $\func{𝓞ᴮ}$ and $\func{𝓞}\circ\field{fst}$,
  we can use the SIP for commutative rings to obtain a path
  $\tyPath{\func{𝓞}\big(D(f)\big)}{\func{𝓞ᴮ}\big(\tmPair{D(f)}{p_f}\big)}$.
  But in $R$-algebras $\universalPShf\big(\tmPair{D(f)}{p_f}\big)$ equals $\locEl{R}{f}$
  definitionally and applying the forgetful functor to this gives us $\locEl{R}{f}$
  as a commutative ring (unfortunately not by \func{refl}).
\end{proof}
As a corollary we obtain the standard sanity check:
\begin{corollary}\label{globalSections}
  $\func{𝓞}\big(\top_{\mathcal{L}_R}\big)~\func{≡}~\func{𝓞}\big(D(1)\big)~\func{≡}~R$.
\end{corollary}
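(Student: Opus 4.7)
The proof plan is to chain two paths together, one for each equality in the displayed statement.

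For the first equality $\func{𝓞}\big(\top_{\mathcal{L}_R}\big) \equiv \func{𝓞}\big(D(1)\big)$, I would appeal directly to the defining relations of the Zariski lattice. By relation (1) in the presentation of $\mathcal{L}_R$, namely $D(1)=\top$, we have a path $\tyPath{D(1)}{\top_{\mathcal{L}_R}}$ in $\mathcal{L}_R$. Applying \func{cong} with the functor $\func{𝓞}$ to this path yields the required identification of rings. This step is essentially a one-liner and uses nothing beyond the universal property of \ZL\ established in \cref{ZarLat}.

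For the second equality $\func{𝓞}\big(D(1)\big) \equiv R$, the idea is to combine \cref{baseSections} with \cref{InvElIds}(2). By \cref{baseSections} applied to $f = 1$, we have a path $\func{𝓞}\big(D(1)\big)\func{≡}\locEl{R}{1}$ in commutative rings. Since $1$ is always a unit in $R$, \cref{InvElIds}(2) gives a path $\locEl{R}{1}\func{≡}R$. Concatenating these two paths (using path composition) yields $\func{𝓞}\big(D(1)\big)\func{≡}R$. Composing this with the path obtained in the previous paragraph finishes the corollary.

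I do not expect any real obstacle here: both building blocks are already available, and the only subtlety is a bookkeeping one — \cref{InvElIds}(2) is stated for both commutative rings and $R$-algebras, so one simply picks the commutative-ring version to match the type of the path produced by \cref{baseSections}, since the structure sheaf $\func{𝓞}$ is valued in commutative rings (obtained from the $R$-algebra valued $\universalPShf$ via the forgetful functor, as in the proof of \cref{baseSections}). No further coherence manipulations are needed; the entire proof is just two applications of \func{cong}/composition of paths already produced by earlier results.
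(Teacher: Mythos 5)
Your proof is correct and takes essentially the same approach as the paper: \cref{baseSections} with $f=1$ followed by \cref{InvElIds}(2). The only (cosmetic) difference is in the first equality: the paper observes that $D(1)$ is the top element of $\ZL$ \emph{by definition}, so the first path is simply $\func{refl}$, whereas you construct a (propositional) path $D(1)\,\func{≡}\,\top$ from the support relations and apply $\func{cong}\,\func{𝓞}$ — both work, the paper's version is just more direct given the concrete quotient implementation of $\ZL$.
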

\begin{proof}
  $D(1)$ is the top element of the Zariski lattice by definition, so the first path is just
  \func{refl}. By \cref{baseSections} we get that
  $\func{𝓞}\big(D(1)\big)~\func{≡}~\locEl{R}{1}$. Combining this with \cref{InvElIds}.2,
  we get the desired path.
\end{proof}
It remains to prove that \func{𝓞ᴮ} is indeed a sheaf.
At this point the standard strategy is to reduce the general case of a cover
$\tyPath{D(h)}{\bigvee_{i=1}^n D(f_i)}$ to the special case $h=1$
and then proceed by some algebraic computations in the rings $\locEl{R}{f_i}$.\footnote{See for example
  \cite[theorem 2.33.]{GoertzWedhorn}, \cite[theorem 1.3.7]{EGA1} or \cite[theorem V.3.3]{StoneSpaces}. Note that in these classical textbooks the sheaf property only has
  to be verified for finite covers because basic opens are quasi-compact. In contrast,
  we are restricted to finite covers by definition.}
Informally this reduction step follows from a short argument, but it identifies
certain localizations by appealing to their canonical isomorphisms.
Making this formal in a system without univalence requires to take the isomorphisms
at face value and results in cumbersome diagram chases.
This problem is described in detail in \cite{SchemesLean}. There the ultimate breaking point was
identifying the rings $\locEl{\locEl{R}{f}}{g}$ and $\locEl{R}{fg}$. As the authors point out,
simply providing a path between those rings won't solve the problem at hand,
since what is actually needed is a path between the diagrams occurring in the sheaf condition.
For the remainder of this section we want to show that
we can conclude that \func{𝓞ᴮ} is a sheaf from the aforementioned special case,
using the observation that the canonical morphisms are unique in $R$-algebras.
In our formalization, the special case of covers of $D(1)$ reads as follows:

\begin{lemma}\label{oneIdealToLimLemma}
  For a ring $A$ with $f_1 ,\dots,f_n:A$ such that
  $1\in\langle f_1 ,\dots,f_n\rangle$, we have
  \begin{align*}
    A~\func{≡}~\lim_{\longleftarrow}~\big\{\, \locEl{A}{f_i} \to \locEl{A}{f_if_j} \leftarrow \locEl{A}{f_j}~\vert~ 1\leq i < j \leq n\,\big\}
  \end{align*}
More precisely, the canonical cone of $A$ over the diagram above is a limiting cone.
\end{lemma}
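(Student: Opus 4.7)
The plan is to prove the lemma by exhibiting a concrete description of the limit as an equalizer and showing that the canonical cone of $A$ is isomorphic to its limiting cone. Concretely, the limit of the diagram in $\func{CommRing}~\ell$ can be constructed as
\[
  L \;=\; \bigl\{(x_i) \in \textstyle\prod_i \locEl{A}{f_i} \;\bigm|\; \iota^{(ij)}_i(x_i) = \iota^{(ij)}_j(x_j) \text{ for all } i<j\bigr\},
\]
where $\iota^{(ij)}_i : \locEl{A}{f_i} \to \locEl{A}{f_i f_j}$ are the canonical localization maps. The canonical cone of $A$ factors through the canonical cone of $L$ via the ring homomorphism $\phi : A \to L$, $\phi(a) = (a/1)_i$. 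If I can show $\phi$ is an isomorphism, then the SIP provides a path $A \equiv L$ of commutative rings which carries the limiting cone of $L$ to the canonical cone of $A$, finishing the proof.

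The key algebraic input is an \emph{exponent boost} lemma: from $1 \in \langle f_1, \ldots, f_n\rangle$ one gets, for every $k : \mathbb{N}$, that $1 \in \langle f_1^k, \ldots, f_n^k\rangle$. This follows by raising $1 = \sum_i c_i f_i$ to a sufficiently high power and using a pigeonhole argument: each term in the multinomial expansion of $(\sum_i c_i f_i)^N$ for $N \geq n(k-1)+1$ must contain some $f_i$ raised to at least the power $k$.

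Injectivity of $\phi$ then follows at once: if $a/1 = 0$ in every $\locEl{A}{f_i}$, pick $m_i$ with $f_i^{m_i}a = 0$, take $k = \max_i m_i$, and write $1 = \sum_i d_i f_i^k$ to conclude $a = \sum_i d_i f_i^k a = 0$. For surjectivity, given a compatible family $(a_i/f_i^{k_i})_i \in L$, I first normalize to a common denominator exponent $k$ by replacing each fraction with $f_i^{k-k_i}a_i / f_i^k$. The compatibility condition $a_i/f_i^k = a_j/f_j^k$ in $\locEl{A}{f_if_j}$ unfolds (after a second round of exponent normalization to a common witness $l$) to honest equalities $f_j^k a_i = f_i^k a_j$ inside $A$. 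Using the exponent boost to write $1 = \sum_i d_i f_i^k$, the candidate preimage $a := \sum_i d_i a_i$ satisfies $f_j^k a = \sum_i d_i f_j^k a_i = \sum_i d_i f_i^k a_j = a_j$, so $a/1 = a_j/f_j^k$ in each $\locEl{A}{f_j}$, as required.

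The main obstacle will be the surjectivity step, and within it the repeated normalization of exponents. Unfolding the localization equivalence relation requires extracting numerical witnesses from the existential quantifiers (or $\Sigma$-types, per \cref{subsec:localizations}), taking termwise maxima over the finite index set, and threading those choices through the subsequent algebraic computation while keeping the equivalence class the same. Once $\phi$ is built and shown to be a ring isomorphism, the SIP-based transport of the limiting property from $L$ to $A$ is mechanical, so the bulk of the formal work is concentrated in the surjectivity proof and the exponent bookkeeping attached to it.
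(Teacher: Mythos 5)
Your proposal is correct and is essentially the same argument the paper uses: the paper's proof is a one-line reference to the standard textbook computation (Mac Lane and Moerdijk, p.\ 125), namely exhibiting the limit as the equalizer of compatible tuples, proving injectivity and surjectivity of the canonical map $A \to L$ by the exponent-normalization and partition-of-unity argument you spell out, and then using the SIP to package the isomorphism as a path. The only nitpick is purely notational: after the ``second round'' of exponent normalization your final display should read $f_j^{\tilde k} a = \tilde a_j$ (with the bumped exponent and numerator), not $f_j^{k} a = a_j$, before concluding $a/1 = a_j/f_j^{k}$ in $\locEl{A}{f_j}$.
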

\begin{proof}
  The proof follows closely the textbook approach,
  see \eg\ Mac Lane and Moerdijk~\cite[p. 125]{SheavesInGeometryAndLogic},
  by some hands-on algebra in the different rings involved.
  It is precisely at this point that working with concrete implementations of
  the $\locEl{A}{f_i}$ as set quotients really simplifies the formalization.
\end{proof}
Reducing the sheaf property of \func{𝓞ᴮ} to \cref{oneIdealToLimLemma} can now be
done using the special nature of \universalPShf.
We also need that the forgetful functor preserves and reflects limits
and some basic results about dependent paths.
In the library this is packaged up in a generalized, technical lemma,
working for arbitrary diagrams, not only those needed for the sheaf property.
For the sake of readability however, we proceed to prove our main result directly.

\begin{theorem}\label{mainThm}
  \func{𝓞ᴮ} is a sheaf on the basic opens.
\end{theorem}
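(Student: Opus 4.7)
The plan is to reduce the sheaf property of $\func{𝓞ᴮ}$ directly to \cref{oneIdealToLimLemma} by localizing at the element whose basic open realizes the join. Suppose we are given basic opens $\alpha_1,\dots,\alpha_n$ in $\mathcal{B}_R$ whose join $\bigvee_{i=1}^n \alpha_i$ is itself a basic open. Since being a limiting cone is a proposition, we may eliminate each of the propositional truncations and assume that $\alpha_i = D(f_i)$ for some $f_i : R$, and that $\bigvee_{i=1}^n D(f_i) = D(h)$ for some $h : R$. This identity in $\ZL$ precisely expresses that $h \in \sqrt{\langle f_1,\dots,f_n\rangle}$, and hence after inverting $h$ we obtain $1 \in \langle f_1/1,\dots,f_n/1\rangle$ inside $\locEl{R}{h}$.

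Next, I would apply \cref{oneIdealToLimLemma} to the ring $A = \locEl{R}{h}$ with the elements $f_1/1,\dots,f_n/1$. This exhibits $\locEl{R}{h}$ as the limit of the diagram whose vertices are the $\locEl{\locEl{R}{h}}{f_i}$ and the $\locEl{\locEl{R}{h}}{f_if_j}$. It then remains to identify this diagram with the $\func{𝓞ᴮ}$-diagram associated to the cover $D(f_1),\dots,D(f_n)$. By \cref{InvElIds}.1 we have $\locEl{\locEl{R}{h}}{f_i} \equiv \locEl{R}{hf_i}$, and since $D(f_i) \leq D(h)$, the element $h/1$ is a unit in $\locEl{R}{f_i}$, so \cref{InvElIds}.3 yields $\locEl{R}{hf_i} \equiv \locEl{R}{f_i}$; the analogous identification applies to the $\locEl{R}{f_if_j}$.

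The subtle point — and the main obstacle — is not the object-level identifications but turning them into an identification of the two \emph{diagrams}, so that the limit property transports. This is exactly the step that failed in the non-univalent approach of \cite{SchemesLean}. Here the key observation is that all the rings involved are canonically $R$-algebras, and by the contractibility hypothesis underpinning \cref{universalPShf}, the restriction morphisms between any two such localizations of $R$ along $\leq$ in the Zariski lattice are unique. Consequently every square that would need to be checked for naturality commutes automatically, and the $R$-algebra paths between the vertices assemble into a path of diagrams via the universal presheaf \universalPShf{} applied to the pointwise identifications. In particular, we do not have to extract or chase any concrete isomorphisms by hand, which is where the univalent approach pays off decisively.

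Finally, the forgetful functor $\RAlg \to \func{CommRing}$ preserves and reflects limits, so transporting along \cref{baseSections} yields that $\func{𝓞ᴮ}\big(D(h)\big) \equiv \locEl{R}{h}$ is the limit of the $\func{𝓞ᴮ}$-diagram associated to $D(f_1),\dots,D(f_n)$, which is the required sheaf condition on the basis. In the formalization this diagram-identification step is packaged into a general lemma working for arbitrary diagrams built from \universalPShf, but for this theorem only the specific instance above is needed.
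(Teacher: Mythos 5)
Your proposal is correct and follows essentially the same route as the paper: reduce to \cref{oneIdealToLimLemma} applied to $A=\locEl{R}{h}$, identify the vertices of the resulting limit diagram with the $\locEl{R}{f_i}$ and $\locEl{R}{f_if_j}$ via \cref{InvElIds}, use the contractibility of $R$-algebra hom-types to assemble these vertex paths into a path of diagrams, and finish by the fact that the forgetful functor $\RAlg\to\func{CommRing}$ preserves and reflects limits. The only thing the paper makes more explicit is the mechanism by which the vertex paths and the contractibility combine into a path of diagrams --- namely, constructing a \func{PathP} between the morphisms over the \cref{InvElIds} paths by transporting and appealing to the center of contraction --- but your description captures the same idea.
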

\begin{proof}
  Again for readability, we restrict ourselves to the case of binary covers,
  \ie\ the situation where $\tyPath{D(h)}{D(f)\vee D(g)}$ for $f,g,h:R$.
  As described in the proof of \cref{PBreduce},
  in this case the sheaf property can be reformulated as stating
  that \textsf{sq} below is a pullback.
  \[
      \begin{tikzcd}
        \func{𝓞ᴮ}\big(\tmPair{D(h)}{p_h}\big) \arrow[r]\arrow[d]\arrow[dr,phantom,"\mathsf{sq}" description] &\func{𝓞ᴮ}\big(\tmPair{D(g)}{p_g}\big)\arrow[d] \\
        \func{𝓞ᴮ}\big(\tmPair{D(f)}{p_f}\big) \arrow[r] &\func{𝓞ᴮ}\big(\tmPair{D(fg)}{p_{fg}}\big)
      \end{tikzcd} \\
    \quad\quad
      \begin{tikzcd}
        \locEl{R}{h} \arrow[r]\arrow[d]\arrow[dr,phantom,"\mathsf{sq}_R" description] &\locEl{R}{g}\arrow[d] \\
        \locEl{R}{f} \arrow[r] &\locEl{R}{fg}
      \end{tikzcd} \\
  \]
  Here the $p$'s are, as in the proof of \cref{baseSections},
  the canonical proofs that the $D$'s are in fact basic opens.
  Note that by definition, \textsf{sq} is obtained by applying the forgetful
  functor to $\mathsf{sq}_R$ and since the forgetful functor \emph{preserves limits}
  (and in particular pullbacks) it suffices to prove that $\mathsf{sq}_R$ is a pullback
  in $R$-algebras.

  The assumption $\tyPath{D(h)}{D(f)\vee D(g)}$ gives us
  $\tyPath{\sqrt{\langle h\rangle}}{\sqrt{\langle f,g\rangle}}$
  and by some standard algebra
  $1\in\langle\nicefrac{f}{1},\nicefrac{g}{1}\rangle$ in $\locEl{R}{h}$.
  This lets us apply \cref{oneIdealToLimLemma} with $A=\locEl{R}{h}$ and
  we get that $\mathsf{sq}^*$ is a pullback (in rings):
  \[
      \begin{tikzcd}
        \locEl{R}{h} \arrow[r]\arrow[d]\pbsign{dr}\arrow[dr,phantom,"\mathsf{sq}^*"] &\locEl{\locEl{R}{h}}{g}\arrow[d]\\
        \locEl{\locEl{R}{h}}{f} \arrow[r] &\locEl{\locEl{R}{h}}{fg}
      \end{tikzcd} \\
  \]
  As all the vertices of $\mathsf{sq}^*$ are $R$-algebras, by the canonical morphisms
  coming from $R$, and all the edges of $\mathsf{sq}^*$ commute with these canonical morphisms,
  we can lift $\mathsf{sq}^*$ to a square $\mathsf{sq}^*_R$ in $R$-algebras.
  Since the forgetful functor \emph{reflects limits} (and thus pullbacks), we get
  that $\mathsf{sq}^*_R$ is a pullback square as well.

  All that we need is a path $\tyPath{\mathsf{sq}^*_R}{\mathsf{sq}_R}$ and we are done,
  as we can transport \emph{the property of being a pullback square} along this path of squares.
  It is immediate in \CubicalAgda that to give a path between squares we need to give
  four paths between the respective vertices and four dependent paths between the morphisms
  over the paths of vertices.
  In order to see how this applies to our situation,
  let us first look at the left side of $\mathsf{sq}^*_R$ and $\mathsf{sq}_R$.
  We get the following square where we have to provide paths at the top and bottom
  and a dependent path filling this square connecting the vertical arrows
  $\psi$ and $\varphi$:
  \[
    \begin{tikzcd}
      \locEl{R}{h}\ar[draw=none]{r}[auto=false]{\func{≡}\!\func{≡}\!\func{≡}\!\func{≡}\!\func{≡}}\arrow[d,"\psi"'] & \locEl{R}{h}\arrow[d,"\varphi"] \\
      \locEl{\locEl{R}{h}}{f}\ar[draw=none]{r}[auto=false]{\func{≡}\!\func{≡}\!\func{≡}\!\func{≡}} & \locEl{R}{f}
    \end{tikzcd}
  \]
  For the top path we just choose \func{refl}.
  For the bottom we apply \cref{InvElIds} and get a path
  \begin{align*}
    \tyPath{\locEl{\locEl{R}{h}}{f}}{\tyPath{\locEl{R}{hf}}{\locEl{R}{f}}}
  \end{align*}
  where the first path is just \cref{InvElIds}.1 and the second path is \cref{InvElIds}.2
  using the fact that $\tyPath{D(hf)}{D(f)}$ by absorption.
  Let $p$ denote the composition of these two paths.
  The dependent path between $\psi$ and $\varphi$ is then of type
  \begin{align*}
    \tyPathP{i}{\text{Hom}_R\big[\locEl{R}{h}\,,\,p~i\,\big]}{\psi}{\varphi}
  \end{align*}
  By a standard result about \func{PathP}, this is equivalent to the non-dependent path type
  \begin{align*}
    \tyPath{\func{transport}~\Big(\lambda~i\to\text{Hom}_R\big[\locEl{R}{h}\,,\,p~i\,\big]\Big)~\psi}{\varphi}
  \end{align*}
  But by definition $\varphi$ is the center of contraction of the type
  $\text{Hom}_R\big[\locEl{R}{h}\,,\,\locEl{R}{f}\,\big]$. By contractibility, we hence
  get a path to the transport of $\psi$ and thus the desired dependent path.
  Repeating this strategy four times, as described in the diagram below,
  gives us the desired path $\tyPath{\mathsf{sq}^*_R}{\mathsf{sq}_R}$ and finishes the proof.
  \begin{center}
  \begin{tikzcd}
    \locEl{R}{h} \arrow[rrrr,"\func{∃!}"{name=ot}]\arrow[dddd,"\func{∃!}"'{name=ol}]
    \ar[draw=none]{dr}[sloped,auto=false]{\func{≡}\!\func{≡}\!\func{≡}\!\func{≡}\!\func{≡}}
    &&&& \locEl{R}{g} \arrow[dddd,"\func{∃!}"{name=or}]
         \ar[draw=none]{dl}[sloped,auto=false]{\func{≡}\!\func{≡}\!\func{≡}\!\func{≡}\!\func{≡}} \\
    & \locEl{R}{h} \arrow[rr,""'{name=it}]\arrow[dd,""{name=il}]\pbsign{ddrr}
    && \locEl{\locEl{R}{h}}{g} \arrow[dd,""'{name=ir}] & \\
    &&&& \\
    & \locEl{\locEl{R}{h}}{f} \arrow[rr,""{name=ib}] && \locEl{\locEl{R}{h}}{fg} &\\
    \locEl{R}{f} \arrow[rrrr,"\func{∃!}"'{name=ob}]
    \ar[draw=none]{ur}[sloped,auto=false]{\func{≡}\!\func{≡}\!\func{≡}\!\func{≡}\!\func{≡}}
    &&&& \locEl{R}{fg}
    \ar[draw=none]{ul}[sloped,auto=false]{\func{≡}\!\func{≡}\!\func{≡}\!\func{≡}\!\func{≡}}
    \arrow[Rightarrow,shorten <=5pt, shorten >=6pt,  from=ir, to=or, "\func{PathP}" description]
    \arrow[Rightarrow,shorten <=6pt, shorten >=6pt,  from=il, to=ol, "\func{PathP}" description]
    \arrow[Rightarrow,shorten <=3pt, shorten >=6pt,  from=it, to=ot, end anchor={[xshift=-1.2ex]}, "\func{PathP}" description]
    \arrow[Rightarrow,shorten <=6pt, shorten >=6pt,  from=ib, to=ob, end anchor={[xshift=-0.3ex]}, "\func{PathP}" description]
  \end{tikzcd}
  \end{center}
\end{proof}
Combining this with \cref{lemma1} we get:

\begin{corollary}
  \func{𝓞} is a sheaf on the Zariski lattice \ZL.
\end{corollary}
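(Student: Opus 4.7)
The plan is to derive this corollary as a direct composition of the two main results already in hand, so essentially no new work will be needed. First I would recall that, by construction earlier in this section, the structure sheaf $\func{𝓞} : \ZL^{op} \to \func{CommRing}~\ell$ was defined as the right Kan extension $\mathsf{Ran}~\func{𝓞ᴮ}$ of the basis-presheaf along the inclusion $\field{fst}:\BO\hookrightarrow\ZL$, exactly as set up in \cref{CatTheory}.

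Next I would invoke \cref{mainThm}, which states that $\func{𝓞ᴮ}$ is a sheaf on the basis $\BO$, in order to discharge the sheaf hypothesis of \cref{lemma1}. Instantiating \cref{lemma1} with $L := \ZL$, $B := \BO$, target category $\mathcal{C} := \func{CommRing}~\ell$, and $\mathcal{G} := \func{𝓞ᴮ}$ then yields that $\mathsf{Ran}~\func{𝓞ᴮ} = \func{𝓞}$ is a sheaf on $\ZL$, which is exactly the claim.

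The only side condition to check is the $\ell$-completeness of the target category $\func{CommRing}~\ell$; but this was already needed to define the Kan extension in the first place and is precisely the reason why $\ZL$ was arranged to live in $\Type~\ell$. I do not anticipate any technical obstacle at this stage: both of the hard steps --- the sheaf property on the basis (\cref{mainThm}) and the Kan-extension preservation lemma (\cref{lemma1}) --- have already been carried out above, so combining them is purely formal and amounts to no more than unfolding the definition of $\func{𝓞}$.
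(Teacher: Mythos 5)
Your proposal is exactly the paper's argument: the corollary follows immediately by applying \cref{lemma1} to the basis-sheaf $\func{𝓞ᴮ}$, whose sheaf property is precisely \cref{mainThm}. The paper offers no further proof beyond this one-line combination, and your observation about $\ell$-completeness of $\func{CommRing}~\ell$ is also consistent with the paper's earlier remarks on why $\ZL$ was placed in $\Type~\ell$.
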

Most of the argument in the proof of \cref{mainThm}, including
the crucial transport goes through for the general \universalPShf~construction
and cones over \emph{arbitrary diagrams}. If we take the action
of \universalPShf~on any cone of any shape, we only need two things for establishing
that this is a limiting cone:
first, a limiting cone in $R$-algebras of the same shape
and second, a family of paths between the corresponding vertices of the two cones.
In the case of structure sheaf the limiting cone is provided by \cref{oneIdealToLimLemma}
and the paths are provided by \cref{InvElIds}.
As a matter of fact, the general case is actually easier to formalize and
computationally better behaved, even though the pullback case is easier to visualize.

\section{Conclusion}
In this paper we presented a fully constructive and predicative
formalization of the structure sheaf on the Zariski lattice
in \CubicalAgda. To this end, we gave a construction of the Zariski lattice associated to
a commutative ring that does not increase the universe level even when working
predicatively. We formalized the notion of sheaf on a distributive lattice
and formally proved the first steps towards
a ``comparison lemma'' for distributive lattices.
In particular, we showed how to extend a sheaf defined on the basis of a lattice,
and taking values in any complete category, to a sheaf on the whole lattice.
Applying this to the Zariski lattice we then constructed the structure sheaf
on its basis. We had to solve higher coherence conditions in order to show
that this construction is well-defined. The main insight was that by essentially
regarding the structure sheaf to be valued in algebras, not rings,
we could use contractibility to solve the coherence issues.
Furthermore, it was the same contractibility results that let us formalize
the textbook proof of the sheaf property with the help of some cubical machinery.

As discussed in the introduction nothing in the paper crucially relied
on cubical features, but they proved convenient in the formalization.
In particular, having more things holding by \func{refl}, eliminators
computing also for higher constructors, and having direct access to
dependent paths in the form of \func{PathP} types simplified many of
the formal proofs. We hope nevertheless that the main ideas introduced
in this paper could prove useful for formalizations in other systems.
For the remainder of this paper we want to make a few comments
that should help putting our work into context.

\subsection{Comparison to the classical definition of affine schemes}
\label{subsec:classicalcomparison}
Even though the constructive, predicative approach described in this paper
is similar to the standard, classical textbook approach to affine schemes
in the sense that it involves a ``lifting'' from basic opens,
it might not be immediately clear whether we loose anything by working
with the Zariski lattice and finitary lattice sheaves.
As mentioned in the introduction, from a classical perspective this is not the case
because $\Spec R$ is a \emph{coherent} space.
A topological space $X$ is coherent if it is \emph{compact}, \emph{sober}
(its non-empty irreducible closed subsets are the closure of a single point),
and its compact opens are closed under finite intersections and form a basis of the topology of $X$. A coherent map between coherent spaces
$X$ and $Y$ is a continuous map $f:X\to Y$ such that for any compact open $K\subseteq Y$,
its pre-image $f^{-1}(K)$ is compact as well. Stone's representation theorem for
distributive lattices \cite{StoneRepresentation} states that the functor
from the category of coherent spaces with coherent maps to distributive lattices,
sending a coherent space to the lattice of its compact opens, is an equivalence
of categories.\footnote{
  Furthermore, any coherent space is coherently homeomorphic to $\Spec R$
  for some ring $R$ \cite{CoherentSpaces}, i.e.\ $\Spec$ as a functor
  from commutative rings to coherent spaces is essentially surjective.}
For the inverse direction we take a distributive lattice and
recover the opens of the corresponding space by taking \emph{ideals} on that lattice.
We can even recover the points of the space by taking \emph{prime filters} on the lattice.
In the case of $\Spec R$ the prime filters of \ZL~are just the complements of prime ideals of $R$.\footnote{See also the discussion by Coquand,
  Lombardi and Schuster in the introduction of \cite{ProjSpec}.}

The approach of defining \ZL{} through formal generators $D(f)$ and
obtaining the locale of Zariski opens as the ideals of \ZL, is taken in Johnstone's
``Stone Spaces'' \cite[Chap. V.3]{StoneSpaces}.
The structure sheaf on the resulting locale of \ZL-ideals can then be constructed by only defining
it on the base elements $D(f)$. In our predicative and constructive setting we
only extend the structure sheaf construction on basic opens to \ZL.
Again, classically no information is lost. Whether one considers
the structure sheaf to be defined on $\Spec R$ as a topological space,
on the locale of \ZL-ideals or only on \ZL, it is determined (up to unique isomorphism)
by what happens at the level of basic opens.

More generally, for any coherent space $X$, the category of sheaves on $X$
is equivalent to the category of (finitary) lattice-sheaves on the compact opens of $X$.
This follows from the comparison lemma for topological spaces, which gives us an
equivalence between sheaves on $X$ and sheaves on the basis of compact opens of $X$.
But since the compact opens are all compact we only have to consider
finite covers for the sheaf property, which gives us the equivalence to
lattice-sheaves on compact opens. Formalizing this classical fact would certainly
be interesting in its own right. But as we are interested in the formalization of
constructive mathematics, we will just see this fact as a justification that
the notion of constructive affine scheme that we arrive at is not
fundamentally weaker than the standard classical definition.

\subsection{Existing formalizations}
To our knowledge, we have presented the first constructive and predicative formalization
of affine schemes. However, there are several classical
formalizations of affine and general schemes
in the literature by now. Examples include an early setoid-based formalization in \Coq
by Chicli \cite{SchemesCoq},
the aforementioned formalization in \Lean's \mathlib~\cite{SchemesLean},
a more recent formalization in \systemname{Isabelle/HOL} \cite{SchemesHOL},
and a univalent \Coq formalization in the \UniMath library~\cite{SchemesUnimath}.
It is noteworthy that none of these formalizations define the structure sheaf
on basic opens first. Instead, they follow
the approach of Hartshorne's classic textbook ``Algebraic Geometry'' \cite{Hartshorne}.
This approach directly defines the structure sheaf on arbitrary opens, but
is inherently non-constructive. Assuming classical reasoning
(including the axiom of choice) it is quite straightforward
to formalize Hartshorne's definition. As a result, the \UniMath
formalization \cite{SchemesUnimath} does not actually use univalence in its
definition of the structure sheaf.

It should be mentioned however, that in the beginning the \Lean
formalization \cite{SchemesLean} did use the ``lift from basic opens approach''.
Being unable to formalize the notion of ``canonical isomorphism'' between
localizations $\locEl{R}{f}$ in a satisfactory way,
\Lean's \mathlib \cite{Lean/mathlib} consequently adopted
a non-standard take on localizations.
Ultimately, the definition of the structure sheaf got completely overhauled
using the Hartshorne approach.
Buzzard \etal{} argue in \cite[Sect. 3.4]{SchemesLean} that
even with the structure sheaf directly defined using univalence,
proving the sheaf property would run
into the same problems that they encountered.
As the equality/path obtained by an application of the univalence axiom would
still carry around the isomorphism in question,
it is a priori unclear what has actually been gained by working with paths,
as opposed to working with isomorphisms directly.
One of the main results of this paper is that on the contrary
we can use univalence in a genuinely helpful way to construct the structure sheaf
on basic opens and prove its sheaf property. This is achieved by shifting the
focus to $R$-algebras, where the canonical isomorphisms between localizations
become the center of contraction of the corresponding path spaces.
Indeed, the localizations $\locEl{R}{f}$ form a full subcategory of the category of
$R$-algebras that is posetal and equivalent to the poset of basic opens.

\subsection{Different univalent approaches to basic opens}
One of the main challenges of our formalization
was to solve the higher coherence issues when constructing
the structure presheaf on basic opens. These coherence issues arose
because the basic opens were defined as a subset of the Zariski lattice
(\ie\ as functions into propositions) using propositional truncation.
In constructive mathematics it is common to define subsets $X$ as
sets $A$ with an embedding $i:A\hookrightarrow X$ and
one can prove in HoTT/UF that these two notions of subsets are equivalent.
This raises the question whether one could define the type of basic opens
more directly, thus eliminating the coherence issues.

The basic opens can be defined as a quotient
on $R$, equating any $f$ and $g$ such that
$\sqrt{\langle f\rangle}=\sqrt{\langle g\rangle}$.
A first, now deprecated, formalization attempt
defined the structure sheaf on this type. However, in this
case we need to map from a set quotient into
an groupoid, which is notoriously hard. The general characterization of such maps
given by Kraus and von Raumer \cite[Thm. 13]{KrausQuotients} is not
easily applicable in this case.
As a result, we ended up working in $R$-algebras
because the contractibility of the path spaces between localizations solved
the coherence issues in this case as well. Rijke has since suggested, in private
communications, that the basic opens can be seen as the Rezk completion
\cite[Sec. 9.9]{HoTTBook} of $R$ as a poset category with the pre-order
$f\leq g$ given by inclusion $\sqrt{\langle f\rangle}\subseteq\sqrt{\langle g\rangle}$.
This could potentially be used for an alternative development
where coherence issues are avoided altogether.

\subsection{Towards constructive quasi-compact, quasi-separated schemes}
The structure sheaf, as constructed in this paper, lets us define
constructive affine schemes. This is of course only the first step towards a
formalization of \emph{constructive schemes}. Schemes are classically defined as a
special class of \emph{locally ringed spaces}. However, in the
constructive, predicative setting of~\cite{ConstrSchemes} we are confined to \emph{ringed lattices}, \ie\ distributive lattices equipped with a sheaf valued
in commutative rings.
These correspond to ringed coherent spaces. Maps between
those are maps of ringed spaces where the underlying continuous map is coherent.
Morphisms of schemes, however, are just morphisms of locally ringed spaces, \ie\
morpisms of ringed spaces that induce local morphisms on the stalks.
In general these two types of morphisms do not coincide.

Fortunately, the situation is well-behaved
for \emph{quasi-compact, quasi-separated} schemes,
a very important class of schemes that, in particular, encompasses all \emph{Noetherian}
schemes.\footnote{Deligne in fact argued that this class of schemes is actually
  sufficient for a lot of applications in algebraic geometry \cite{EGA4.5}.}
They are actually just the schemes where the underlying topological space is coherent.
Furthermore, if $X$ and $Y$ are quasi-compact, quasi-separated schemes,
for any morphism of locally ringed spaces
$(f,f^\sharp):(X,\mathcal{O}_X)\to(Y,\mathcal{O}_Y)$,
the underlying continuous map $f$ is coherent.
As pointed out in \cite{ConstrSchemes},
this was essentially already proved by Grothendieck \cite[Sec. 6.1]{EGA1}.
This makes the constructive lattice-based approach to quasi-compact, quasi-separated
schemes as worked out in \cite{ConstrSchemes} possible.

Such an approach still needs to be able to talk about morphisms of
quasi-compact, quasi-separated schemes, \ie\ morphisms of locally ringed spaces.
This problem is circumvented in \cite{ConstrSchemes} by considering
\emph{locally affine morphisms}. A locally affine morphism
is induced by ring homomorphisms on affine covers and it is a standard
exercise to show that for general schemes
this is equivalent to a morphism of locally ringed spaces.
For a formalization however, it could be advantageous to work with a
constructive reformulation of morphisms of locally ringed spaces.
Schuster discusses the right constructive, point-free notion of a
morphism of locally ringed spaces in the setting of formal topology in
\cite{SchusterZariski}.  Transferring this to a development based on
ringed lattices could lead to a constructive account of quasi-compact,
quasi-separated schemes closer to the usual classical
presentation and easier to formalize.

\subsubsection*{Acknowledgments}

We would like to thank Thierry Coquand for his continued feedback and
invaluable comments throughout this project.  We are also indebted to
Felix Cherubini for his comments and his work on the algebra section
of the \CubicalAgda~library, particularly for his ring solver.
Furthermore, we thank Peter LeFanu Lumsdaine, Egbert Rijke and the
participants of the ``Proof and Computation'' autumn school in
Fischbachau for our discussions.

\printbibliography

\end{document}